\documentclass[11pt,reqno]{amsart}
\usepackage{setspace}
\usepackage{amsmath, amssymb, amscd, amsthm, amsfonts}
\usepackage{graphicx}

\usepackage{enumitem}
\usepackage{mathdots}
\usepackage{mathpazo}

\oddsidemargin 0pt
\evensidemargin 0pt
\marginparwidth 40pt
\marginparsep 10pt
\topmargin -20pt
\headsep 10pt
\textheight 8.7in
\textwidth 6.65in
\linespread{1.2}

\usepackage{indentfirst}
\usepackage{amssymb,amsmath,amsfonts,amsthm}
\usepackage[all]{xy}
\usepackage{enumerate}
\usepackage{mathrsfs}
\usepackage{graphicx}
\numberwithin{equation}{section}
\usepackage[all]{xy}
\usepackage{amssymb,amscd,amsthm,amsmath,graphicx,color}
\usepackage{mathrsfs}
\usepackage[english]{babel}
\usepackage[utf8x]{inputenc}

\newtheorem{theorem}{Theorem}[section]
\newtheorem*{proposition*}{Proposition}
\newtheorem{proposition}[theorem]{Proposition}
\newtheorem{corollary}[theorem]{Corollary}
\newtheorem{lemma}[theorem]{Lemma}
\newtheorem{example}[theorem]{Example}
\newtheorem{remark}[theorem]{Remark}
\newtheorem{definition}[theorem]{Definition}
\newtheorem{question}[theorem]{Question}

\DeclareMathOperator{\Hom}{Hom}
\DeclareMathOperator{\Ext}{Ext}
\DeclareMathOperator{\Tor}{Tor}

\DeclareMathOperator{\Supp}{Supp}
\DeclareMathOperator{\depth}{depth}

\DeclareMathOperator{\pd}{pd}
\DeclareMathOperator{\cd}{cd}
\DeclareMathOperator{\gdim}{G-dim}
\DeclareMathOperator{\id}{id}

\DeclareMathOperator{\Spec}{Spec}

\DeclareMathOperator{\Ass}{Ass}

\DeclareMathOperator{\ann}{ann}

\title{Cohen-Macaulay pairs}

\author[\,R. Holanda]{\,Rafael Holanda}
\address{Departamento de Matemática, Universidade Federal da Paraíba - 58051-900, João Pessoa, PB, Brazil}
\email{rf.holanda@gmail.com}

\author[\,C. B. Miranda-Neto]{\,Cleto B. Miranda-Neto}
\address{Departamento de Matemática, Universidade Federal da Paraíba - 58051-900, João Pessoa, PB, Brazil}
\email{cleto@mat.ufpb.br}

\date{\today}
\keywords{Local cohomology, cohomological dimension, depth, Cohen-Macaulay module, Auslander-Reiten}
\subjclass[2020]{Primary 13D45, 13C14; Secondary 13C10, 13C15, 13H10, 13D07.}
\thanks{Corresponding author: C. B. Miranda-Neto, email: {\tt cleto@mat.ufpb.br}}

\begin{document}

\begin{abstract} The main purpose of this note is to extend and establish a new approach to the concept of (relative) Cohen-Macaulayness, by investigating the cohomological dimension as well as the depth of a pair of modules over a commutative Noetherian ring. The notion also largely extends the cohomologically complete intersection property of Hellus and Schenzel. As crucial tools, we use Herzog's theory of generalized local cohomology along with spectral sequence techniques. We also provide byproducts concerning two classical problems in homological commutative algebra. \end{abstract}

\maketitle

\section{Preliminaries: Main concepts}

Throughout this note, by a {\it ring} we mean a commutative Noetherian ring with non-zero identity. We let $R$ stand for a ring and $I$ a proper ideal of $R$. By a {\it finite} $R$-module we mean a finitely generated $R$-module. Background on commutative and homological algebra can be found in \cite{BH}, \cite{W}.

The main purpose of this work is to investigate extensions of the classical Cohen-Macaulay property in terms of the (non-)vanishing of generalized local cohomology modules, expressed in terms of appropriate numerical invariants -- precisely, the cohomological dimension and the depth of a pair of modules, which are defined below in this section. In particular, the property of Cohen-Macaulayness for pairs of modules is introduced in this paper (see Definition \ref{pair-def}). Applications concerning two celebrated problems in homological commutative algebra are given (see subsections \ref{Hu} and \ref{AR}), as well as a new characterization of cohomologically complete intersection ideals in the sense of \cite{HS} (see Corollary \ref{nice-appl}). In the sequel, we introduce the main definitions.

\begin{definition}\label{gen-cohomol}{\rm (\cite{H}, also \cite{B})} \rm  Let $M, N$ be $R$-modules. For an integer $i\geq 0$, the {\it $i$-th generalized local cohomology module of $M, N$ with respect to $I$} is defined by
$$H^{i}_I(M, N)   =  \displaystyle \varinjlim_{q}{\rm Ext}_R^{i}(M/I^qM,\,N),$$ where the direct system is induced by the natural maps $M/I^{q+1}M\to M/I^qM$. Notice that, letting $M=R$, this definition retrieves the classical local cohomology module $H^{i}_I(N)$.

\end{definition}

For more on the above notion, see also \cite{FJMS, HZ, S}. In particular, in \cite{FJMS}, various compelling reasons are given for the study of generalized local cohomology, including a characterization of Cohen-Macaulay modules, the development of a more general theory of deficiency and canonical modules, and criteria for prescribed bound on projective dimension of modules over quotients of Gorenstein local rings. In this sense, the present note can be regarded as a continuation of \cite{FJMS}.

Now, recall for completeness that the $I$-depth of a finite $R$-module $N$, denoted ${\rm depth}_IN$, can be defined as the length of a maximal $N$-sequence contained in $I$, or equivalently, the grade of $I$ on $N$ (note this number is zero if $I=(0)$). As usual, in order to extend this definition for general $N$, we set 
$\depth_IN=\inf\{i\geq 0 \mid H^i_I(N)\neq0\}$. Also, the cohomological dimension of $N$ with respect to $I$ is defined as $\cd_IN=\sup\{i\geq 0 \mid H^i_I(N)\neq0\}$. Below we recall definitions in the more general setting of pairs of modules, which plays a major role in this note.

\begin{definition}\label{defs}\rm
Consider a pair of $R$-modules $M, N$.
\begin{itemize}
    \item [(i)] The {\it depth of the pair $M, N$ with respect to $I$} is $$\depth_I(M,N)=\inf\{i\geq 0 \mid H^i_I(M,N)\neq0\};$$
    \item [(ii)] The {\it cohomological dimension of the pair $M, N$ with respect to $I$} is $$\cd_I(M,N)=\sup\{i\geq 0 \mid H^i_I(M,N)\neq0\}.$$
\end{itemize} Note these invariants extend the classical ones by letting $M=R$, i.e., we have $\depth_I(R,N)=\depth_IN$ and $\cd_I(R,N)=\cd_IN$. Also, if $(R, \mathfrak{m})$ is local and $I=\mathfrak{m}$, we write $\depth_{\mathfrak{m}}(M,N)=\depth_R(M,N)$ and $\cd_{\mathfrak{m}}(M,N)=\cd_R(M,N)$.
\end{definition}

\begin{remark}\label{BiZa}\rm In \cite[Proposition 5.5]{B} it was proved that, for any pair of finite $R$-modules $M, N$,
$$\depth_I(M,N)=\depth_{\ann_RM/IM}N.$$ If $R$ is local, this formula retrieves the equality $\depth_R(M,N)=\depth_RN$ obtained in \cite[Theorem 2.3]{S}.
\end{remark}

\begin{definition}\rm (\cite{R}) An $R$-module $N$ is ({\it relative}) {\it Cohen-Macaulay with respect to $I$} if  $\depth_IN=\cd_IN$.

\end{definition}

Note the above concept extends the classical Cohen-Macaulay property; indeed, if $R$ is local and $N$ is a finite Cohen-Macaulay $R$-module, then $\depth_RN= \cd_RN$. Now, in general, it is clear from the definitions that $\depth_IN\leq \cd_IN$, whereas for pairs
$$\depth_I(M,N)\leq \cd_I(M,N).$$ Such facts motivated us to introduce the following concept.

\begin{definition}\label{pair-def}\rm 
We say that a pair of $R$-modules $M,N$ is a {\it Cohen-Macaulay pair with respect to $I$} if $$\depth_I(M,N)=\cd_I(M,N).$$ More precisely, for a non-negative integer $t$, we say that $M, N$ is a {\it $t$-Cohen-Macaulay pair with respect to $I$} if $M, N$ is a Cohen-Macaulay pair with respect to $I$ and $\depth_I(M,N)=t$.
Furthermore, if $(R,\mathfrak{m})$ is local, we simply say that $M,N$ is a {\it Cohen-Macaulay pair} if $M, N$ is a Cohen-Macaulay pair with respect to $\mathfrak{m}$. 
\end{definition}

\begin{remark}\label{cci} \rm It is clear that the pair $R,N$ is Cohen-Macaulay with respect to $I$ if and only if $N$ is Cohen-Macaulay with respect to $I$. Therefore, Definition \ref{pair-def} generalizes the concept of relative Cohen-Macaulayness. By letting in addition $N=R$, the concept also extends the cohomologically complete intersection property of \cite{HS} (the ideal $I$ is said to have such a property if  ${\rm grade}\,I=\cd_IR$).
\end{remark}

\begin{remark}\label{depth-finite}\rm Suppose $M, N$ are finite $R$-modules. Since $\depth_IN\leq \depth_{\ann_RM/IM}N$, we can use Remark \ref{BiZa} to write $\depth_IN\leq \depth_I(M,N)$. Thus it seems natural to ask whether (or when) the quantity $\cd_IN$ fits somewhere in the inequalities $$\depth_IN\leq\depth_I(M,N)\leq\cd_I(M,N)$$ (which later in Theorem \ref{c+f} we will prove to hold for general $N$, among other facts). For instance, is it true that necessarily $\cd_I(M,N)\leq \cd_IN$? The answer is no (see Example \ref{CMnotCMpair} below), and in fact in Proposition \ref{cd=h+t} we shall detect conditions under which $\cd_I(M,N)\geq \cd_IN$ takes place. 
\end{remark}

\begin{example}\label{CMnotCMpair}\rm Consider $R=k[\![x,y]\!]/(xy)$, where $k$ is a field. It can be checked that
$$H^{2i}_{xR}(R/xR,yR)\cong\Ext^{2i}_R(R/xR,yR)\neq 0 \quad \mbox{for\, all} \quad i\geq 0,$$ whereas $H^j_{xR}(yR)=0$ for all $j\neq0$. In other words, we have $$\cd_{xR}(R/xR,yR)=\infty \quad \mbox{and} \quad \cd_{xR}yR=0.$$ This also shows that the pair $R/xR, yR$ is not a Cohen-Macaulay pair with respect to $xR$, while on the other hand $yR$ is Cohen-Macaulay with respect to $xR$. 
We wonder how to find an example (if any) where $\cd_IN <\cd_I(M,N)<\infty$.
\end{example}

Finally, the main technical tools used in our proofs are the following spectral sequences.

\begin{lemma}{\rm(\cite[Proposition 2.1]{FJMS})}\label{ss}
For  $R$-modules $M, N$, with $M$ finite, there are spectral sequences:
\begin{itemize}
    \item [{\rm (i)}] $E_2^{p,q}=H^p_I(\Ext^q_R(M,N))\Rightarrow_p E_{\infty}^{p+q}=H^{p+q}_I(M,N)$;
    \item [{\rm (ii)}] $E_2^{p,q}=\Ext^p_R(M,H^q_I(N))\Rightarrow_p E_{\infty}^{p+q}=H^{p+q}_I(M,N)$.
\end{itemize}
\end{lemma}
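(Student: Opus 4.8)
The plan is to realize both spectral sequences as instances of the Grothendieck spectral sequence attached to a single composite functor, computed in the two possible orders. First I would observe that $H^i_I(M,-)$ is the $i$-th right derived functor of the left-exact functor $T(-):=\Gamma_I(\Hom_R(M,-))$, where $\Gamma_I$ denotes the $I$-torsion functor. Indeed, a homomorphism $f\in\Hom_R(M,N)$ lies in $\Gamma_I(\Hom_R(M,N))$ exactly when $I^qf=0$ for some $q$, equivalently $f(I^qM)=0$, so that $\Gamma_I(\Hom_R(M,N))=\varinjlim_q\Hom_R(M/I^qM,N)$. Applying $T$ to an injective resolution $N\to E^\bullet$ and using that filtered colimits are exact (hence commute with cohomology), I obtain $R^iT(N)=\varinjlim_q H^i(\Hom_R(M/I^qM,E^\bullet))=\varinjlim_q\Ext^i_R(M/I^qM,N)=H^i_I(M,N)$, matching Herzog's definition.

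For spectral sequence (i) I would apply Grothendieck's theorem directly to the composite $T=\Gamma_I\circ\Hom_R(M,-)$. Since $\Hom_R(M,-)$ carries injectives to injectives (because $\Hom_R(-,\Hom_R(M,E))\cong\Hom_R(-\otimes_R M,E)$ is exact for injective $E$), and injective modules are $\Gamma_I$-acyclic, the hypotheses of Grothendieck's theorem are met, yielding $E_2^{p,q}=H^p_I(\Ext^q_R(M,N))\Rightarrow H^{p+q}_I(M,N)$.

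For (ii) the crucial ingredient is the functorial identity $\Gamma_I(\Hom_R(M,N))\cong\Hom_R(M,\Gamma_I(N))$, valid when $M$ is finite. Fixing finitely many generators of $M$, a homomorphism $f\colon M\to N$ is $I$-torsion in $\Hom$ if and only if each generator is sent into $\Gamma_I(N)$, which is precisely the condition $f(M)\subseteq\Gamma_I(N)$. This lets me rewrite $T=\Hom_R(M,-)\circ\Gamma_I$ and invoke Grothendieck's theorem in the opposite order: here I need $\Gamma_I$ to send injectives to $\Hom_R(M,-)$-acyclic modules, which holds because over a Noetherian ring $\Gamma_I$ preserves injectivity, and injectives are trivially $\Hom_R(M,-)$-acyclic. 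This produces $E_2^{p,q}=\Ext^p_R(M,H^q_I(N))\Rightarrow H^{p+q}_I(M,N)$.

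The step I expect to be the main obstacle is not a single hard computation but the careful verification of the functorial inputs: namely (a) the identification $R^iT=H^i_I(M,-)$, which requires commuting the filtered colimit past cohomology; (b) the finiteness-dependent isomorphism $\Gamma_I\Hom_R(M,-)\cong\Hom_R(M,\Gamma_I(-))$, which is exactly where the hypothesis that $M$ be finite enters and without which (ii) would fail to converge to $H^{p+q}_I(M,N)$; and (c) confirming the acyclicity conditions of Grothendieck's theorem in both orders, in particular the Noetherian-dependent fact that $\Gamma_I$ preserves injectives. Once these are in place, both spectral sequences follow at once.
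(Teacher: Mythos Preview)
The paper does not supply its own proof of this lemma; it is simply quoted from \cite{FJMS}. Your overall strategy---realize $H^i_I(M,-)$ as the derived functors of $\Gamma_I\circ\Hom_R(M,-)=\Hom_R(M,-)\circ\Gamma_I$ and invoke the Grothendieck spectral sequence in each order---is the standard one, and your treatment of part (ii) is correct.

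There is, however, a genuine error in your justification for part (i). You assert that $\Hom_R(M,-)$ carries injectives to injectives, arguing that $\Hom_R(-\otimes_RM,E)$ is exact for injective $E$. This is false: the functor $-\otimes_RM$ is only right exact, so composing with the exact contravariant functor $\Hom_R(-,E)$ yields a functor that is merely left exact. Concretely, take $R=\mathbb{Z}$, $M=\mathbb{Z}/2\mathbb{Z}$, $E=\mathbb{Q}/\mathbb{Z}$; then $\Hom_R(M,E)\cong\mathbb{Z}/2\mathbb{Z}$, which is not divisible and hence not injective. In fact, $\Hom_R(M,E)$ is injective for every injective $E$ precisely when $M$ is flat, so your argument as stated only covers that case.

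What Grothendieck's theorem actually requires is the weaker condition that $\Hom_R(M,E)$ be $\Gamma_I$-\emph{acyclic}, and this does hold when $M$ is finite and $R$ is Noetherian. One way to see it: since $M$ is finitely presented and the \v{C}ech complex $\check{C}^\bullet$ on generators of $I$ has flat terms, $\check{C}^\bullet\otimes_R\Hom_R(M,E)\cong\Hom_R(M,\check{C}^\bullet\otimes_RE)$. Each $\check{C}^p\otimes_RE$ is a finite sum of localizations of $E$, hence injective, and the complex $\check{C}^\bullet\otimes_RE$ has cohomology concentrated in degree $0$ (equal to $\Gamma_I E$, which is itself injective). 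Thus $H^i_I(\Hom_R(M,E))\cong\Ext^i_R(M,\Gamma_IE)=0$ for $i>0$. With this correction in place your Grothendieck argument for (i) goes through; note that this step, like your identification $\Gamma_I\Hom_R(M,-)\cong\Hom_R(M,\Gamma_I(-))$, genuinely uses the finiteness of $M$.
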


\section{Cohomological dimension, depth, and Ext modules}\label{setupsection}

\subsection{Cohomological dimension and Ext modules}

Given two $R$-modules $M,N$, we write $$e_R(M,N)=\sup\{i\geq 0 \mid \Ext^i_R(M,N)\neq0\},$$ which, if $M$ and
$N$ are both finite, is typically dubbed the {\it Gorenstein projective dimension of $M$ relative to $N$}  (see \cite{DH}); in this case, if in addition $R$ is local and $M$ has finite projective dimension, say $\rho$, then by \cite[Lemma 1(iii), p.\,154]{Matsu} we have an equality $e_R(M,N)=\rho$.

We also recall the auxiliary lemma below; it holds quite generally for modules that are only required to be finitely presented (over rings that are not necessarily Noetherian), but the following particular statement suffices for our purposes herein.

\begin{lemma}{\rm(\cite[Proposition 4.7]{CJR})}\label{suppcd}
Let $M, N$ be $R$-modules, with $M$ finite. If $\Supp_RN\subset\Supp_RM$, then $\cd_IN\leq\cd_IM.$
\end{lemma}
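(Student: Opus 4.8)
The plan is to prove the stronger-looking but equivalent statement that $H^i_I(N)=0$ for every $i>\cd_I M$, which immediately yields $\cd_I N\le \cd_I M$. Write $c=\cd_I M$, and note $c<\infty$ because $M$ is finite over the Noetherian ring $R$, so Grothendieck vanishing gives $H^j_I(-)=0$ for all $j>\dim R$. My first move is to reduce to the case where $N$ is finite. Every module is the filtered union of its finite submodules $N_\lambda$, and since local cohomology commutes with direct limits over a Noetherian ring, $H^i_I(N)=\varinjlim_\lambda H^i_I(N_\lambda)$. As $\Supp_R N_\lambda\subseteq \Supp_R N\subseteq \Supp_R M$ for each $\lambda$, it is enough to show the vanishing for finite submodules; hence I may assume $N$ is finite.

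The crux is Gruson's theorem: since $M$ is finite and $\Supp_R N\subseteq \Supp_R M$ with $N$ finite, there is a finite filtration $0=N_0\subset N_1\subset\cdots\subset N_t=N$ whose successive quotients $N_j/N_{j-1}$ are each a homomorphic image of a finite direct sum $M^{\oplus n_j}$. Splicing the long exact sequences of local cohomology attached to $0\to N_{j-1}\to N_j\to N_j/N_{j-1}\to 0$, and noting $\cd_I(M^{\oplus n})=\cd_I M=c$, the desired vanishing is reduced (by induction on the filtration length) to the single assertion: for every finite module $Q$ that is a quotient of some $M^{\oplus n}$, one has $H^i_I(Q)=0$ for all $i>c$.

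To establish this last assertion I would argue by descending induction on $i$, fixing the statement $P(i)$: ``$H^i_I(N')=0$ for every finite $N'$ with $\Supp_R N'\subseteq \Supp_R M$.'' The base case $i>\dim R$ is Grothendieck vanishing. For the inductive step, assume $P(i+1)$ and let $i>c$; given $Q$ a quotient of $M^{\oplus n}$, choose $0\to K\to M^{\oplus n}\to Q\to 0$ with $K$ finite and $\Supp_R K\subseteq \Supp_R M$. Since $i>c$ gives $H^i_I(M^{\oplus n})=0$, the long exact sequence produces an injection $H^i_I(Q)\hookrightarrow H^{i+1}_I(K)$, and $H^{i+1}_I(K)=0$ by the hypothesis $P(i+1)$ applied to the finite module $K$. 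Thus $H^i_I(Q)=0$, and feeding this back into the filtration argument of the previous paragraph yields $P(i)$ in full.

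I expect the main obstacle to be the correct bookkeeping of the two interlocking inductions—descending on the cohomological degree $i$ and ascending on the filtration length—together with the precise invocation of Gruson's theorem; the delicate point is ensuring that the auxiliary kernels $K$ remain finite and supported in $\Supp_R M$, so that the class over which one inducts is closed under the operations used and the descending induction is well founded (it terminates thanks to the uniform bound $H^j_I(-)=0$ for $j>\dim R$). By contrast, the reduction to finite $N$ via direct limits and the manipulation of the long exact sequences are routine.
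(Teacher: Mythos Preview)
The paper does not prove this lemma; it merely quotes it from \cite[Proposition~4.7]{CJR}. Your argument is the standard d\'evissage route to this fact and is essentially correct: reduce to finite $N$ via direct limits, invoke Gruson's filtration theorem, and run a descending induction on the cohomological degree interlaced with an ascending induction on the filtration length. The bookkeeping you describe (kernels $K$ are finite submodules of $M^{\oplus n}$, hence supported in $\Supp_R M$) is exactly right.

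One small imprecision: you justify the base case of the descending induction by Grothendieck vanishing above $\dim R$, and you assert $c<\infty$ ``because $M$ is finite over the Noetherian ring $R$''. Noetherian rings can have infinite Krull dimension, so neither claim is automatic. The clean repair is twofold. First, if $c=\infty$ there is nothing to prove, so you may assume $c<\infty$. Second, for the starting point of the descending induction use the \v{C}ech bound instead: since $R$ is Noetherian, $I$ is generated by finitely many elements, say $r$, and then $H^j_I(X)=0$ for every $R$-module $X$ and every $j>r$. This gives a uniform finite threshold independent of any dimension hypothesis, and the rest of your argument goes through verbatim.
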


Now, in the next proposition,
the inequality as well as the isomorphism
have already been proved in \cite[Theorem 2.5]{DH}. Our proof is different, and we complement the result by characterizing when the formula  $\cd_I(M,N)= \cd_IM\otimes_RN+e_R(M,N)$ holds.

\begin{proposition}\label{m+e}
If $M,N$ are finite $R$-modules, then $\cd_I(M,N)\leq \cd_IM\otimes_RN+e_R(M,N)$. Moreover, if $e_R(M,N)<\infty$, there is an isomorphism $$H^{\cd_IM\otimes_RN+e_R(M,N)}_I(M,N)\cong H^{\cd_IM\otimes_RN}_I(\Ext^{e_R(M,N)}_R(M,N)),$$
and the equality $\cd_I(M,N)= \cd_IM\otimes_RN+e_R(M,N)$ holds if and only if $$\cd_I\Ext^{e_R(M,N)}_R(M,N)=\cd_IM\otimes_RN.$$
\end{proposition}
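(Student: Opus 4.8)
The plan is to use the first spectral sequence from Lemma~\ref{ss}(i), namely $E_2^{p,q}=H^p_I(\Ext^q_R(M,N))\Rightarrow H^{p+q}_I(M,N)$, and exploit the fact that it is concentrated in a bounded region of the $(p,q)$-plane. Let me write $e = e_R(M,N)$ and $c = \cd_I M\otimes_R N$. The idea is that the ``top right corner'' term $E_2^{c,e} = H^c_I(\Ext^e_R(M,N))$ is a permanent cycle and survives to the abutment, which will simultaneously give the inequality, the isomorphism, and the equality criterion.

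First I would establish the isomorphism claimed in the statement. For the entry $(p,q)=(c,e)$, observe that $q=e$ is the largest index with $\Ext^q_R(M,N)\neq 0$, so all entries with $q>e$ vanish; and $p=c=\cd_I M\otimes_R N$ needs to be identified as the largest $p$ for which $H^p_I(\Ext^e_R(M,N))$ can be nonzero. Here I would invoke Lemma~\ref{suppcd}: since $\Ext^e_R(M,N)$ is a finite module whose support is contained in $\Supp_R M\cap\Supp_R N = \Supp_R(M\otimes_R N)$, we get $\cd_I \Ext^e_R(M,N)\leq \cd_I(M\otimes_R N)=c$, so $E_2^{p,e}=0$ for $p>c$. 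Thus $E_2^{c,e}$ sits in the extreme corner: no nonzero differential can enter it (differentials $d_r$ raise $p$ by $r$ and lower $q$ by $r-1$, so an incoming differential would originate at $(c-r,e+r-1)$ with $q>e$, which vanishes) and none can leave it (an outgoing differential lands at $(c+r,e-r+1)$ with $p>c$, which vanishes). Therefore $E_\infty^{c,e}=E_2^{c,e}=H^c_I(\Ext^e_R(M,N))$. Since the abutment in total degree $n=c+e$ has a finite filtration whose only possibly nonzero graded piece is $E_\infty^{c,e}$ (any other $E_\infty^{p,q}$ with $p+q=c+e$ has either $q>e$, hence vanishes, or $q<e$ forcing $p>c$, hence also vanishes after the support argument is applied uniformly), we obtain the desired isomorphism
$$H^{c+e}_I(M,N)\cong H^c_I(\Ext^e_R(M,N)).$$

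For the inequality $\cd_I(M,N)\leq c+e$, the key point is that every $E_2^{p,q}$ with $p+q>c+e$ vanishes: such a pair forces either $q>e$ (so $\Ext^q_R(M,N)=0$) or, if $q\leq e$, then $p>c\geq\cd_I\Ext^q_R(M,N)$, where the last inequality again follows from Lemma~\ref{suppcd} applied to each $\Ext^q_R(M,N)$ whose support lies in $\Supp_R(M\otimes_R N)$. Since all $E_\infty$ terms in total degree exceeding $c+e$ vanish, the abutment $H^n_I(M,N)=0$ for $n>c+e$, giving the bound.

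Finally, for the equality criterion, I would argue that $\cd_I(M,N)=c+e$ holds precisely when $H^{c+e}_I(M,N)\neq 0$, which by the isomorphism above is equivalent to $H^c_I(\Ext^e_R(M,N))\neq 0$. Since we have already shown $\cd_I\Ext^e_R(M,N)\leq c$, the module $H^c_I(\Ext^e_R(M,N))$ is nonzero exactly when $\cd_I\Ext^e_R(M,N)=c$; this yields the stated equivalence $\cd_I(M,N)=c+e \iff \cd_I\Ext^e_R(M,N)=\cd_I M\otimes_R N$. The main obstacle is making the corner-survival argument airtight: one must verify carefully that the support-containment hypothesis of Lemma~\ref{suppcd} is genuinely met for \emph{each} relevant $\Ext$ module (not merely the top one), and that no hidden differential or extension problem in the finite filtration obstructs reading off the top cohomology from the single surviving corner term. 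The finiteness hypothesis $e<\infty$ is what guarantees the spectral sequence occupies a bounded strip $0\leq q\leq e$, which is exactly what makes the corner $(c,e)$ a true extreme point.
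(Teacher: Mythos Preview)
Your proposal is correct and follows essentially the same approach as the paper: both use the spectral sequence of Lemma~\ref{ss}(i), invoke Lemma~\ref{suppcd} (via $\Supp_R\Ext^q_R(M,N)\subset\Supp_R M\otimes_RN$) to conclude that $E_2^{p,q}=0$ whenever $p>c$ or $q>e$, and then read off the inequality, the corner isomorphism, and the equivalence from convergence. Your write-up simply spells out the corner-survival and filtration details that the paper compresses into the phrase ``follow from convergence.''
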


\begin{proof}
Note $\Supp_R\Ext^{e_R(M,N)}_R(M,N)\subset\Supp_RM\otimes_R N$. Now Lemma \ref{suppcd} gives $$\cd_I\Ext^q_R(M,N)\leq \cd_IM\otimes_RN \quad \mbox{for\, all} \quad q\geq 0,$$ so that the spectral sequence of Lemma \ref{ss}(i) is such that $$E_2^{p,q}=0 \quad \mbox{for\, all} \quad  p>\cd_IM\otimes_RN \quad \mbox{or} \quad q>e_R(M,N).$$ The proposed inequality and isomorphism follow from convergence, and the equivalence is now immediate.
\end{proof}

\smallskip

For a precise description of the relation between $\cd_I(M,N)$ and $\cd_IN$ under appropriate conditions, we introduce the following invariant: $$h_I(M,N)=\sup_{q\geq 0}\{e_R(M,H^q_I(N))\}.$$

\begin{proposition}\label{cd=h+t}
Let $M,N$ be two $R$-modules, with $M$ finite. If $N$ is Cohen-Macaulay with respect to $I$, then $$\cd_I(M,N)=\cd_IN+h_I(M,N).$$ In particular, $\cd_I(M,N)\geq\cd_IN$.
\end{proposition}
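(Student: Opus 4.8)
The plan is to push the Cohen--Macaulay hypothesis through the spectral sequence of Lemma \ref{ss}(ii). First I would note that the hypothesis pins down the local cohomology of $N$ completely: since $\depth_I N$ and $\cd_I N$ are respectively the infimum and the supremum of the set $\{q\geq 0 : H^q_I(N)\neq 0\}$, the equality $\depth_I N=\cd_I N=:c$ forces this set to be the singleton $\{c\}$. Hence $H^q_I(N)=0$ for all $q\neq c$ (while $H^c_I(N)\neq 0$), so the $E_2$-page $E_2^{p,q}=\Ext^p_R(M,H^q_I(N))$ is concentrated in the single row $q=c$.

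Next I would invoke the standard fact that a first-quadrant cohomological spectral sequence with a single nonzero row degenerates at $E_2$: for every $r\geq 2$ the differential $d_r$ shifts the row index, hence has source or target in a vanishing row and must be zero. Thus $E_2=E_\infty$, the abutment filtration on $H^n_I(M,N)$ is trivial, and one obtains natural isomorphisms
$$H^n_I(M,N)\cong \Ext^{\,n-c}_R(M,H^c_I(N))\qquad (n\geq 0),$$
with $\Ext$ in negative degree read as zero. Taking the supremum over the degrees $n$ with $H^n_I(M,N)\neq 0$ and substituting $n=p+c$ then gives
$$\cd_I(M,N)=c+\sup\{p\geq 0 : \Ext^p_R(M,H^c_I(N))\neq 0\}=\cd_I N+e_R(M,H^c_I(N)).$$
Since $H^q_I(N)=0$ for $q\neq c$, every term $e_R(M,H^q_I(N))$ with $q\neq c$ is vacuous, so $h_I(M,N)=e_R(M,H^c_I(N))$ and the asserted equality drops out.

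The clause ``in particular $\cd_I(M,N)\geq \cd_I N$'' then reduces to the single inequality $h_I(M,N)\geq 0$, i.e.\ to the existence of some $p\geq 0$ with $\Ext^p_R(M,H^c_I(N))\neq 0$. This is the one step I expect to require genuine care rather than formal bookkeeping: the collapse is automatic, but $e_R(M,-)$ can be vacuous when supports fail to meet (indeed $\Ext^\bullet_R(M,H^c_I(N))$ vanishes identically once $\Supp_R M\cap \Supp_R H^c_I(N)=\emptyset$). The plan is therefore to secure nonnegativity of $h_I(M,N)$ from the structure of $H^c_I(N)$---it is $I$-torsion, so $\Supp_R H^c_I(N)\subseteq V(I)$---together with the finiteness of $M$; in the local case $I=\mathfrak{m}$ one can simply use that $H^c_{\mathfrak{m}}(N)$ has nonzero socle and factor a nonzero map through a surjection $M\twoheadrightarrow R/\mathfrak{m}$ to see $\Hom_R(M,H^c_{\mathfrak{m}}(N))\neq 0$, whence $h_I(M,N)\geq 0$.
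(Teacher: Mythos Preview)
Your argument for the main equality is exactly the paper's: the Cohen--Macaulay hypothesis collapses the spectral sequence of Lemma~\ref{ss}(ii) to the single row $q=c$, giving $\Ext^p_R(M,H^{c}_I(N))\cong H^{p+c}_I(M,N)$ for all $p\ge 0$, and the formula $\cd_I(M,N)=\cd_IN+h_I(M,N)$ drops out. You have simply spelled out in detail what the paper compresses into one sentence.

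Your caution about the clause ``in particular $\cd_I(M,N)\geq\cd_I N$'' is well placed and in fact goes beyond the paper, which deduces it silently from the equality. As you observe, this amounts to $h_I(M,N)\ge 0$, i.e.\ $\Ext^p_R(M,H^c_I(N))\neq 0$ for some $p$, and that can genuinely fail when the supports do not meet: over $R=k\times k$ with $I=0\times k$, $N=R$ (so $c=0$ and $H^0_I(R)=k\times 0$) and $M=0\times k$, one has $IM=M$, hence $H^i_I(M,N)=0$ for all $i$, while $\cd_IN=0$. Your local argument---$H^c_{\mathfrak m}(N)$ is nonzero $\mathfrak m$-torsion and hence has nonzero socle, and any finite $M\neq 0$ surjects onto $R/\mathfrak m$---is correct and handles the case $I=\mathfrak m$; the paper's one-line proof does not address the general case at all, so you have not missed anything that is actually there.
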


\begin{proof}
The spectral sequence of Lemma \ref{ss}(ii) degenerates in such a way that $$\Ext^p_R(M,H^{\cd_IN}_I(N))\cong H^{p+\cd_IN}_I(M,N) \quad \mbox{for\, all} \quad p\geq 0,$$ whence the result.
\end{proof}

\begin{example}\rm It follows from Proposition \ref{cd=h+t} that the invariant $h_I(M,N)$ is not always finite. Indeed, as in Example \ref{CMnotCMpair}, pick a field $k$ and let  $R=k[\![x,y]\!]/(xy)$, $I=xR$, $M=R/xR$, and $N=yR$. We have seen that $N$ is Cohen-Macaulay with respect to $I$ (with $\cd_IN=0$) and that, on the other hand, $\cd_I(M,N)=\infty$. Therefore, the proposition yields $$h_I(M,N)=\infty.$$
\end{example}

Next, we record yet another result, where projective and injective dimensions (of finite modules) over a given local ring $R$ are represented, respectively, by $\pd_R$ and $\id_R$. First, a preparatory lemma is in order.

\begin{lemma}{\rm(\cite[Theorem 3.5]{DH})}\label{cdformula}
Let $R$ be a $d$-dimensional Cohen-Macaulay local ring and let $M, N$ be finite $R$-modules such that either $\pd_RM<\infty$ or $\id_RN<\infty$. Then,
$\cd_R(M,N)= d-\depth_{\ann_RN}M$. In particular, $\cd_R(M,N)\leq d$.
\end{lemma}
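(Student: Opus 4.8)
The plan is to compute $\cd_R(M,N)=\cd_{\mathfrak{m}}(M,N)$ through Matlis duality and local duality, reducing the determination of the top nonvanishing generalized local cohomology to a grade computation. First I would pass to the completion $\widehat{R}$: since it is faithfully flat over $R$, commutes with $H^i_{\mathfrak{m}}(-,-)$, $\Ext$, $\depth$, $\dim$ and $\grade$ for finite modules, and preserves finiteness of $\pd_R$ and $\id_R$, I may assume $R$ is a complete Cohen-Macaulay local ring. In particular $R$ admits a canonical module $\omega=\omega_R$, which is maximal Cohen-Macaulay (so $\depth\omega=d$ and $\Supp\omega=\Spec R$) and has finite injective dimension $\id_R\omega=d$; write $(-)^{\vee}=\Hom_R(-,E)$ for the Matlis dual, with $E=E(R/\mathfrak{m})$. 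Note that once the displayed equality is proved, the bound $\cd_R(M,N)\le d$ is immediate from $\depth_{\ann_RN}M\ge 0$.

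The heart of the argument is a duality isomorphism. Applying $\mathbf{R}\Gamma_{\mathfrak{m}}$ to $\mathbf{R}\!\Hom_R(M,N)$ recovers $H^i_{\mathfrak{m}}(M,N)$ as in Lemma \ref{ss}(i), while Grothendieck local duality gives $\mathbf{R}\Gamma_{\mathfrak{m}}(X)^{\vee}\cong\mathbf{R}\!\Hom_R(X,\omega)[d]$ for $X$ with bounded finite cohomology. Taking $X=\mathbf{R}\!\Hom_R(M,N)$ (bounded under either hypothesis) and using the evaluation isomorphism
$$M\otimes^{\mathbf{L}}_R\mathbf{R}\!\Hom_R(N,\omega)\;\xrightarrow{\ \sim\ }\;\mathbf{R}\!\Hom_R(\mathbf{R}\!\Hom_R(M,N),\omega),$$
which holds because $\omega$ has finite injective dimension, I obtain after taking cohomology
$$H^i_{\mathfrak{m}}(M,N)^{\vee}\;\cong\;H^{\,d-i}\!\left(M\otimes^{\mathbf{L}}_R\mathbf{R}\!\Hom_R(N,\omega)\right).$$
Since $(-)^{\vee}$ is faithful, $\cd_R(M,N)=d-\inf\{\,j:\;H^{\,j}(M\otimes^{\mathbf{L}}_R\mathbf{R}\!\Hom_R(N,\omega))\neq 0\,\}$, so the whole problem reduces to identifying that infimum with $\grade(\ann_RN,M)=\depth_{\ann_RN}M$, where I use $\grade(\mathfrak{a},X)=\inf\{j:\Ext^j_R(R/\mathfrak{a},X)\neq0\}$ and $\Supp N=V(\ann_RN)$.

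To finish I would treat the two hypotheses separately, both yielding the same value. If $\pd_RM<\infty$, then $M$ is perfect and I can move $M$ inside the Hom; combined with the vanishing $\Tor^R_{>0}(M,\omega)=0$ (valid for every module of finite projective dimension over a Cohen-Macaulay local ring, since localizing shows the positive Tor's are supported only at $\mathfrak{m}$, and then the acyclicity lemma forces them to vanish) this collapses the right-hand complex to the module $M\otimes_R\omega$ and gives $H^i_{\mathfrak{m}}(M,N)^{\vee}\cong\Ext^{\,d-i}_R(N,M\otimes_R\omega)$. Hence the infimum equals $\grade(\ann_RN,M\otimes_R\omega)$, and the Auslander depth formula together with $\Supp(M\otimes_R\omega)=\Supp M$ gives $\grade(\ann_RN,M\otimes_R\omega)=\grade(\ann_RN,M)$, as required. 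If instead $\id_RN<\infty$, then $\mathbf{R}\!\Hom_R(N,\omega)$ has finite projective dimension, hence is a perfect complex, and the infimum of $M\otimes^{\mathbf{L}}_R\mathbf{R}\!\Hom_R(N,\omega)$ can again be read off and identified with $\grade(\ann_RN,M)$ by the analogous amplitude bookkeeping.

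The main obstacle is precisely this last identification of the infimum with the grade, uniformly across the two hypotheses: the finiteness of $\pd_RM$ or of $\id_RN$ is exactly what keeps the relevant complex bounded in the correct range (equivalently, what forces $\Tor^R_{>0}(M,\omega)=0$, respectively the perfectness of $\mathbf{R}\!\Hom_R(N,\omega)$), and is therefore responsible both for the equality and for the bound $\cd_R(M,N)\le d$. A clean sanity check, and an independent derivation of the lower bound that explains the shape of the formula, is the reduction by a maximal $M$-regular sequence $x_1,\dots,x_g$ in $\ann_RN$ (so $g=\depth_{\ann_RN}M$): since each $x_i$ annihilates $N$, it acts as zero on $H^\bullet_{\mathfrak{m}}(M,N)$, so the long exact sequence attached to multiplication by $x_i$ splits into short exact sequences yielding $\cd_R(M/x_iM,N)=\cd_R(M,N)+1$; iterating reduces the statement to the case $\grade(\ann_RN,M)=0$, where one must show $\cd_R(M,N)=d$.
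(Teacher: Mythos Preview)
The paper does not supply a proof of this lemma; it is quoted verbatim from \cite[Theorem 3.5]{DH} and used as a black box. So there is no argument in the present paper to compare your proposal against.

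As for the proposal itself, the overall architecture---reduce to the complete case, invoke Grothendieck local duality to get $H^i_{\mathfrak m}(M,N)^{\vee}\cong H^{d-i}\bigl(M\otimes^{\mathbf L}_R\mathbf R\!\Hom_R(N,\omega)\bigr)$, then read off the top nonvanishing index as a grade---is the standard route and is close in spirit to what \cite{DH} does. Two places deserve more care. First, the tensor-evaluation isomorphism $M\otimes^{\mathbf L}_R\mathbf R\!\Hom_R(N,\omega)\simeq\mathbf R\!\Hom_R(\mathbf R\!\Hom_R(M,N),\omega)$ is correct here, but the reason is that $M$ is finite \emph{and} $\omega$ has finite injective dimension; saying only ``because $\omega$ has finite injective dimension'' is not quite enough. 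Second, and more seriously, the identification of the infimum with $\grade(\ann_RN,M)$ is left as a sketch. In the case $\pd_RM<\infty$ you still owe an argument that $\grade(\ann_RN,\,M\otimes_R\omega)=\grade(\ann_RN,M)$; equality of supports alone does not give equality of grades, so you should either localize and use the Auslander--Buchsbaum formula together with $\Tor^R_{>0}(M,\omega)=0$, or argue directly that a sequence in $\ann_RN$ is $M$-regular iff it is $(M\otimes_R\omega)$-regular. In the case $\id_RN<\infty$ the phrase ``analogous amplitude bookkeeping'' hides real work: one needs, for instance, Sharp/Foxby's equivalence between finite injective dimension and finite projective dimension over a Cohen--Macaulay local ring with canonical module to justify that $\mathbf R\!\Hom_R(N,\omega)$ is perfect, and then an actual computation of the infimum. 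The reduction by a maximal $M$-sequence in $\ann_RN$ at the end is a nice check, but note it only reduces to showing $\cd_R(M,N)=d$ when $\grade(\ann_RN,M)=0$, which itself requires the duality argument (or an equivalent nonvanishing statement), so it is not an independent proof.
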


\begin{corollary}\label{cd=dim}
Let $(R,\mathfrak{m})$ be a $d$-dimensional Cohen-Macaulay local ring and $M, N$ be finite $R$-modules such that either $\pd_RM<\infty$ or $\id_RN<\infty$. If $N$ is maximal Cohen-Macaulay, then $\cd_R(M,N)=d$ and $$\Ext^i_R(M,H_\mathfrak{m}^d(N))=0 \quad \mbox{for\, all} \quad i>0.$$
\end{corollary}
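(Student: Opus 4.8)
The plan is to combine the degeneration of the spectral sequence of Lemma \ref{ss}(ii) with the upper bound supplied by Lemma \ref{cdformula}; we may assume $M\neq 0$, as otherwise the assertions are trivial. First I would observe that, since $N$ is maximal Cohen-Macaulay, $\depth_{\mathfrak{m}}N=\dim N=d$, so the characterizations of depth and dimension via local cohomology force $H^q_{\mathfrak{m}}(N)=0$ for every $q\neq d$ while $H^d_{\mathfrak{m}}(N)\neq 0$; in particular $N$ is Cohen-Macaulay with respect to $\mathfrak{m}$ with $\cd_{\mathfrak{m}}N=d$. Consequently the spectral sequence $E_2^{p,q}=\Ext^p_R(M,H^q_{\mathfrak{m}}(N))\Rightarrow H^{p+q}_{\mathfrak{m}}(M,N)$ collapses onto the single row $q=d$, yielding isomorphisms
$$\Ext^p_R(M,H^d_{\mathfrak{m}}(N))\cong H^{p+d}_{\mathfrak{m}}(M,N)\qquad\text{for all }p\geq 0,$$
exactly as in the proof of Proposition \ref{cd=h+t} (equivalently, that proposition gives $\cd_R(M,N)=d+e_R(M,H^d_{\mathfrak{m}}(N))$).

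Next I would invoke Lemma \ref{cdformula}, whose hypotheses are precisely those assumed here, to obtain the crucial upper bound $\cd_R(M,N)\leq d$. This at once kills the higher cohomology, $H^{p+d}_{\mathfrak{m}}(M,N)=0$ for $p>0$, so the displayed isomorphisms give $\Ext^p_R(M,H^d_{\mathfrak{m}}(N))=0$ for all $p>0$, which is the second assertion. It also reduces the equality $\cd_R(M,N)=d$ to a single non-vanishing statement, namely $H^d_{\mathfrak{m}}(M,N)\neq 0$, i.e.\ (taking $p=0$ above) $\Hom_R(M,H^d_{\mathfrak{m}}(N))\neq 0$.

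Establishing this last non-vanishing is the main obstacle, since the tools cited so far produce only the upper bound and nothing yet forces cohomology to be present in degree $d$. I would settle it by a soft argument that does not use the finite projective or injective dimension hypothesis (these enter only through Lemma \ref{cdformula}): the module $H^d_{\mathfrak{m}}(N)$ is nonzero and $\mathfrak{m}$-torsion, hence has nonzero socle, so there is an embedding $R/\mathfrak{m}\hookrightarrow H^d_{\mathfrak{m}}(N)$. Applying the left-exact functor $\Hom_R(M,-)$ and using that $\Hom_R(M,R/\mathfrak{m})\cong\Hom_k(M/\mathfrak{m}M,k)\neq 0$ (as $M\neq 0$, by Nakayama), one gets $\Hom_R(M,H^d_{\mathfrak{m}}(N))\neq 0$; together with $\cd_R(M,N)\leq d$ this yields $\cd_R(M,N)=d$. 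An alternative for this step would be local duality, identifying $\Hom_R(M,H^d_{\mathfrak{m}}(N))$ with the Matlis dual of $M\otimes_R\Hom_R(N,\omega_R)$ after completing to acquire a canonical module $\omega_R$, which is nonzero because both tensor factors are nonzero finite modules; I would prefer the socle argument so as to avoid passing to the completion.
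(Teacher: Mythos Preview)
Your argument is correct and follows the same route as the paper: the paper's one-line proof simply invokes Proposition~\ref{cd=h+t} (i.e., the degeneration of the spectral sequence of Lemma~\ref{ss}(ii) when $N$ is Cohen--Macaulay with respect to $\mathfrak{m}$) together with the bound $\cd_R(M,N)\le d$ from Lemma~\ref{cdformula}, which is exactly what you do. The only point where you go beyond the paper is the explicit socle argument showing $\Hom_R(M,H^d_{\mathfrak{m}}(N))\neq 0$; the paper absorbs this into the ``in particular, $\cd_I(M,N)\ge\cd_IN$'' clause of Proposition~\ref{cd=h+t} (equivalently, into the finiteness of $\cd_R(M,N)=d-\depth_{\ann_RN}M$ from Lemma~\ref{cdformula}), so your version is a more careful unpacking of a step the paper leaves tacit rather than a different approach.
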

\begin{proof}
It follows immediately from Proposition \ref{cd=h+t} (with $I=\mathfrak{m}$) and Lemma \ref{cdformula}.
\end{proof}

\subsection{Depth, and more on cohomological dimension versus Ext modules}
Let us first show that if $N$ is not required to be Cohen-Macaulay with respect to $I$ (or even finite), then at least there is an inequality $\cd_I(M,N)\leq\cd_IN+h_I(M,N)$ (cf.\,Proposition \ref{cd=h+t}).
Even more, we have the following statement which also involves depth and gives additional information about cohomological dimension related to the quantity $h_I(M, N)$.

\begin{theorem}\label{c+f}
Let $M,N$ be a pair of $R$-modules, with $M$ finite. The following assertions hold:

\begin{itemize}
    \item [{\rm (i)}] $\depth_IN\leq\depth_I(M,N)\leq\cd_I(M,N)\leq\cd_IN+h_I(M,N)$;
    \item [{\rm (ii)}] There is an isomorphism $$\Ext^{h_I(M,N)}_R(M,H^{\cd_IN}_I(N))\cong H^{\cd_IN+h_I(M,N)}_I(M,N);$$
     \item [{\rm (iii)}] If either $\depth_IN=0$ or $\depth_IN\geq {\rm max}\{1,\, h_I(M,N)\}$, then $$H^{\depth_IN}_I(M,N)\cong\Hom_R(M,H^{\depth_IN}_I(N)).$$
\end{itemize}
\end{theorem}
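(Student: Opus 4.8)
The plan is to derive all three assertions from a single engine, the spectral sequence
$$E_2^{p,q}=\Ext^p_R(M,H^q_I(N))\ \Longrightarrow\ H^{p+q}_I(M,N)$$
of Lemma \ref{ss}(ii), by first locating the region of the second page on which it can fail to vanish. Indeed $E_2^{p,q}=0$ whenever $q<\depth_IN$ or $q>\cd_IN$ (since then $H^q_I(N)=0$), and also whenever $p>h_I(M,N)$ (since $h_I(M,N)=\sup_q e_R(M,H^q_I(N))\geq e_R(M,H^q_I(N))$ forces $\Ext^p_R(M,H^q_I(N))=0$). Thus the nonzero part of the $E_2$-page is confined to the rectangle $0\leq p\leq h_I(M,N)$, $\depth_IN\leq q\leq\cd_IN$. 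As the spectral sequence is first-quadrant, it converges, and everything below amounts to reading off the corners and edges of this rectangle; in particular the argument needs only that $M$ be finite, so it applies to general $N$.

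For (i), the middle inequality $\depth_I(M,N)\leq\cd_I(M,N)$ is immediate from the definitions. For the outer ones, note $E_2^{p,q}=0$ as soon as $p+q<\depth_IN$ (then $q<\depth_IN$), so $H^i_I(M,N)=0$ for $i<\depth_IN$ and hence $\depth_I(M,N)\geq\depth_IN$; dually, $E_2^{p,q}=0$ once $p+q>\cd_IN+h_I(M,N)$ (then $p>h_I(M,N)$ or $q>\cd_IN$), whence $\cd_I(M,N)\leq\cd_IN+h_I(M,N)$. For (ii), set $c=\cd_IN$ and $h=h_I(M,N)$ and examine the top-right corner $(h,c)$: every differential leaving it lands in a column $p>h$ and every differential reaching it issues from a row $q>c$, so both targets and sources vanish and $E_2^{h,c}=E_\infty^{h,c}$. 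Since $(h,c)$ is the only lattice point with $p+q=c+h$ surviving to $E_\infty$ (any other has $p>h$ or $q>c$), the abutment in total degree $c+h$ is exactly $E_\infty^{h,c}$, giving $\Ext^{h}_R(M,H^{c}_I(N))\cong H^{c+h}_I(M,N)$.

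Part (iii) is the mirror image at the bottom-left corner $(0,s)$, where $s=\depth_IN$, and the edge homomorphism $H^s_I(M,N)\to E_2^{0,s}=\Hom_R(M,H^s_I(N))$ is the candidate isomorphism. No differential can reach $(0,s)$ (sources would have $p<0$), so automatically $E_\infty^{0,s}\subseteq E_2^{0,s}$; the task is to upgrade this to an equality and to show that $E_\infty^{0,s}$ exhausts $H^s_I(M,N)$. The two hypotheses single out the configurations in which the neighbourhood of this corner is clear: when $s=0$ the point $(0,0)$ is the genuine corner of the first quadrant, so all outgoing differentials fall into rows $q<0$ and the degree-$0$ abutment has no further layer; when $s\geq\max\{1,h_I(M,N)\}$ the rectangle degenerates so that each remaining point on the antidiagonal $p+q=s$ with $p\geq1$ has $q<s$, forcing $E_2^{p,s-p}=0$, while the outgoing differentials $d_r\colon E_r^{0,s}\to E_r^{r,s-r+1}$ again land in rows $q=s-r+1<s$ where $H^q_I(N)=0$. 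In either regime one obtains $E_2^{0,s}=E_\infty^{0,s}=H^s_I(M,N)$, which is the asserted isomorphism.

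The step I expect to be the main obstacle is precisely the edge-map bookkeeping in (iii): one must verify \emph{simultaneously} that $E_\infty^{0,s}=E_2^{0,s}$ (no higher differential out of the corner survives) and that $E_\infty^{p,s-p}=0$ for every $p\geq1$ (no other filtration layer contributes in total degree $s$), so that the edge map is an honest isomorphism and not merely a one-sided map. I would organize this verification along the two cases of the hypothesis and trace the differentials $d_r\colon E_r^{0,s}\to E_r^{r,s-r+1}$ explicitly against the shape of the $E_2$-page; it is this tracing, carried out with care, that pins down the corner term and completes the proof.
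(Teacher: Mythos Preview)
Your proposal is correct and follows essentially the same approach as the paper: both use the spectral sequence of Lemma~\ref{ss}(ii), confine the nonzero $E_2$-terms to the rectangle $0\le p\le h_I(M,N)$, $\depth_IN\le q\le\cd_IN$, and read off (i) and (ii) from convergence and the top-right corner. For (iii) the paper simply invokes ``the five-term exact sequence associated to $E$,'' whereas you spell out the edge-map argument at the bottom-left corner $(0,s)$ explicitly; your version is more detailed but amounts to the same mechanism (and, incidentally, your tracing shows that the condition $s\ge h_I(M,N)$ is not actually used---the vanishing $E_2^{p,s-p}=0$ for $p\ge1$ and the vanishing of the targets of $d_r$ follow already from $s\ge1$).
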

\begin{proof}
The spectral sequence of Lemma \ref{ss}(ii) is such that $E_2^{p,q}=0$ for all integers $p,q\geq 0$ satisfying $$p>h_I(M,N) \quad \mbox{or} \quad q>\cd_IN \quad \mbox{or} \quad q<\depth_IN.$$ From the convergence of $E$, we obtain $$H^i_I(M,N)=0 \quad \mbox{for\, all} \quad i<\depth_IN \quad \mbox{or} \quad i>\cd_IN+h_I(M,N),$$ from which the inequalities of (i) come. The isomorphism in (ii) follows from the fact that $$E_2^{h_I(M,N),\cd_IN}\cong H^{\cd_IN+h_I(M,N)}_I(M,N).$$ The assertion (iii) follows directly from the convergence of the five-term exact sequence associated to $E$.
\end{proof}

\begin{remark}\label{depthremark}\rm
(i) As already mentioned in Remark \ref{depth-finite}, the inequality $\depth_IN\leq\depth_I(M,N)$ is known to hold when both $M$ and $N$ are finite $R$-modules; so, in particular, part (i) of our theorem generalizes this fact.

\smallskip

(ii) Under the conditions of Theorem \ref{c+f}(iii), we derive that the equality $\depth_IN=\depth_I(M,N)$ is true if and only if the intersection $\Supp_RM\cap\Ass_RH^{\depth_IN}_I(N)$ is non-empty (this clearly holds if, for instance, $\Supp_RM=\Spec R$).

\end{remark}

To close the section, by combining Proposition \ref{m+e} and Theorem \ref{c+f} we immediately obtain the following consequences.

\begin{corollary}\label{bounds}
If $M, N$ is a pair of finite $R$-modules, then
$$\depth_IN\leq\depth_I(M,N)\leq\cd_I(M,N)\leq\min\{\cd_IM\otimes_RN+e_R(M,N),\,\cd_IN+h_I(M,N)\}.$$
\end{corollary}

\begin{corollary}
If $M, N$ are finite $R$-modules such that $\cd_IN+h_I(M,N)=\cd_IM\otimes_RN+e_R(M,N)<\infty$, then $$\Ext^{h_I(M,N)}_R(M,H^{\cd_IN}_I(N))\cong H^{\cd_IN+h_I(M,N)}_I(M,N)\cong H^{\cd_IM\otimes_RN}_I(\Ext^{e_R(M,N)}_R(M,N)).$$
\end{corollary}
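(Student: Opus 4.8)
The plan is to read off both isomorphisms from the two results we are instructed to combine and then splice them together at the common cohomological degree. Set $a=\cd_IN+h_I(M,N)$ and $b=\cd_IM\otimes_RN+e_R(M,N)$, so that the hypothesis is precisely $a=b<\infty$. The first point I would record is that finiteness of $b$ forces $e_R(M,N)<\infty$, since $\cd_IM\otimes_RN$ and $e_R(M,N)$ are both non-negative and their sum is finite. This is the only step that requires a moment's care, as Proposition \ref{m+e} produces its isomorphism under the standing hypothesis $e_R(M,N)<\infty$; once this is checked, the proposition applies verbatim.

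Next I would invoke Proposition \ref{m+e} to obtain
$$H^{b}_I(M,N)\cong H^{\cd_IM\otimes_RN}_I(\Ext^{e_R(M,N)}_R(M,N)),$$
and, in parallel, Theorem \ref{c+f}(ii) to obtain
$$\Ext^{h_I(M,N)}_R(M,H^{\cd_IN}_I(N))\cong H^{a}_I(M,N).$$
Since $a=b$, the generalized local cohomology modules $H^{a}_I(M,N)$ and $H^{b}_I(M,N)$ are literally the same object, so threading the two isomorphisms above through this common module yields the full chain
$$\Ext^{h_I(M,N)}_R(M,H^{\cd_IN}_I(N))\cong H^{\cd_IN+h_I(M,N)}_I(M,N)\cong H^{\cd_IM\otimes_RN}_I(\Ext^{e_R(M,N)}_R(M,N))$$
claimed in the statement.

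I do not anticipate any genuine obstacle: the substance of the corollary is carried entirely by Proposition \ref{m+e} and Theorem \ref{c+f}(ii), and the only real verification is the elementary observation that the finiteness assumption on the sum $b$ transfers to $e_R(M,N)$, which is what licenses the use of Proposition \ref{m+e}. The result is therefore immediate once the two cohomological degrees $a$ and $b$ are matched.
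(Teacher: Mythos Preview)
Your proposal is correct and follows exactly the approach indicated in the paper, which states that the corollary is obtained ``by combining Proposition \ref{m+e} and Theorem \ref{c+f}''; you have spelled out precisely this combination, including the observation that the finiteness hypothesis guarantees $e_R(M,N)<\infty$ so that the isomorphism of Proposition \ref{m+e} is available.
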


\begin{question}\rm Our Corollary \ref{bounds} may suggest that we consider the class of pairs of finite $R$-modules $M, N$ such that 
    $$\depth_I(M,N)=\min\{\cd_IM\otimes_RN+e_R(M,N),\,\cd_IN+h_I(M,N)\},$$ which is thus contained in that formed by the Cohen-Macaulay pairs with respect to $I$ (see Definition \ref{pair-def}). Is such a containment strict?
\end{question}

\subsection{Application: Generalized Huneke's problem}\label{Hu} Given an $R$-module $N$, denote by ${\rm Ass}_RN$ its set of associated primes. The well-known Huneke's conjecture, raised in \cite[Conjecture 5.1]{Hunekeconjecture}, predicts that if $R$ is local and $N$ is finite then, for all $i \geq 0$, the set ${\rm Ass}_RH^i_{I}(N)$ is finite. There exist counterexamples to this conjecture -- the first one was given in \cite{Ka} -- but the issue as to when  ${\rm Ass}_RH^i_{I}(N)$ is finite at least for some $i$ (it is well-known that $i\geq 2$ is the situation of interest) remains intriguing; see \cite{BBLSZ} and its references on the theme. Here we consider the following natural generalization of this problem.

\medskip

\noindent {\it Generalized Huneke's question}: If $M, N$ are finite, when is the set $\Ass_RH^i_{I}(M, N)$ finite for some $i\geq 2$? 

\medskip

In fact we deal here with the more general situation where $N$ is only assumed to be weakly Laskerian, in the following sense.

\begin{definition}\rm An $R$-module $N$ is said to be \textit{weakly Laskerian} if the set $\Ass_RN/U$ is finite for every $R$-submodule $U$ of $N$.
    
\end{definition}

Note for completeness that any Noetherian module (e.g., a finite $R$-module) is weakly
Laskerian. If $\Supp_RN$ is finite, then $N$ is weakly
Laskerian; in particular, any Artinian $R$-module is weakly Laskerian. If $M$ is finite and $N$ is weakly Laskerian, then the $R$-modules $\Ext_R^i(M, N)$ and $\Tor^R_i(M, N)$ are weakly Laskerian for all $i \geq 0$. We refer to \cite{DAM} for details. For instance, the
following auxiliary lemma is a special case of \cite[Theorem 3.1 and Remark 2.7(ii)]{DAM}.

\begin{lemma}\label{ord-finite} Let $N$ be a weakly Laskerian $R$-module. Suppose there exists an integer $c\geq 0$ such that $H_I^i(N)=0$ for all $i\neq c$. Then, $\Ass_RH^c_{I}(N)$ is finite.
\end{lemma}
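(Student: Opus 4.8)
The plan is to leverage the hypothesis that the local cohomology of $N$ is concentrated in a single degree $c$, so that the generalized local cohomology essentially reduces to a single $\Ext$ computation. Concretely, I would apply the spectral sequence of Lemma \ref{ss}(ii) with $M = R$ replaced by an auxiliary finite module, but more directly, since $H^i_I(N) = 0$ for all $i \neq c$, the second spectral sequence collapses: the only nonzero row is $q = c$, giving $H^{p+c}_I(M,N) \cong \Ext^p_R(M, H^c_I(N))$ for a finite module $M$. However, the statement as written concerns $\Ass_R H^c_I(N)$ itself, which is the classical (single-module) local cohomology, so the pair machinery is not actually needed for this particular lemma; the task is purely to show finiteness of associated primes.

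First I would recall the standard reduction: $\Ass_R H^c_I(N)$ finite follows once we know $H^c_I(N)$ is a \emph{weakly Laskerian} module, because by definition a weakly Laskerian module has only finitely many associated primes (taking the submodule $U = 0$). So the heart of the matter is to prove that under the single-nonvanishing-degree hypothesis, $H^c_I(N)$ inherits weak Laskerianness from $N$. The key structural input is that the class of weakly Laskerian modules is a Serre subcategory (closed under submodules, quotients, and extensions), as established in \cite{DAM}.

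The main step is an induction on $c$ using the functorial long exact sequences for local cohomology. When $c = 0$, $H^0_I(N) = \Gamma_I(N)$ is a submodule of $N$, hence weakly Laskerian directly. For $c > 0$, I would pass to $\overline{N} = N/\Gamma_I(N)$, which is weakly Laskerian (quotient of a submodule situation) and $I$-torsion-free, so that $H^0_I(\overline N) = 0$ and $H^i_I(\overline N) \cong H^i_I(N)$ for $i \geq 1$. Choosing an $N$-regular element or embedding $\overline N$ into an injective and using the connecting homomorphisms, one shifts the concentration degree down by one while preserving the weakly Laskerian property at each stage, since each term in the relevant exact sequence lies in the Serre subcategory. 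Iterating brings us to the base case. This is precisely the content of \cite[Theorem 3.1]{DAM}, which is why the lemma is stated as a direct citation.

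The main obstacle I anticipate is ensuring at each inductive step that the \emph{hypothesis} of single-degree concentration is correctly transported to the reduced module $\overline N$, and that no spurious cohomology appears in intermediate degrees when one breaks $N$ into pieces; controlling this requires the vanishing $H^i_I(N) = 0$ for $i \neq c$ to be used at the right spot so that the long exact sequence genuinely isolates a single nonzero term. Since all the heavy lifting is packaged in \cite[Theorem 3.1 and Remark 2.7(ii)]{DAM}, I would simply verify that our hypotheses match theirs and invoke the cited result, rather than reconstruct the induction in full.
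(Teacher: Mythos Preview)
Your proposal is correct and matches the paper's approach exactly: the paper gives no proof of its own for this lemma, simply recording it as a special case of \cite[Theorem 3.1 and Remark 2.7(ii)]{DAM}, which is precisely what you conclude after your exploratory sketch. Your additional outline of the inductive mechanism (Serre subcategory of weakly Laskerian modules, reduction on $c$) is consistent with the argument in \cite{DAM}, though note that when $c>0$ the hypothesis already gives $\Gamma_I(N)=0$, so the real shift comes from the injective-embedding step rather than from passing to $N/\Gamma_I(N)$.
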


Our contribution to the generalized Huneke's question is the following byproduct of Theorem \ref{c+f}.

\begin{corollary} Let $N$ be weakly Laskerian and $M$ be finite. Assume that $N$ is Cohen-Macaulay with respect to $I$, and that either $$\cd_IN=0 \quad \mbox{or} \quad \cd_IN\geq {\rm max}\{1,\, h_I(M,N)\}.$$ Then, the set ${\rm Ass}_RH^{\cd_IN}_I(M,N)$ is finite.
\end{corollary}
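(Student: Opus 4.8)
The plan is to reduce the statement about the generalized local cohomology $H^{\cd_IN}_I(M,N)$ to a statement about the ordinary local cohomology $H^{\cd_IN}_I(N)$, for which the weakly Laskerian hypothesis can be exploited through Lemma \ref{ord-finite}.

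First I would record that, since $N$ is Cohen-Macaulay with respect to $I$, we have $\depth_IN=\cd_IN=:c$, and this forces $H^i_I(N)=0$ for every $i\neq c$, because the infimum and the supremum of the degrees in which $H^\bullet_I(N)$ is nonzero coincide. As $N$ is weakly Laskerian and exactly one ordinary local cohomology module survives, Lemma \ref{ord-finite} (with that single nonvanishing degree $c$) applies directly and yields that $\Ass_RH^c_I(N)$ is a \emph{finite} set.

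Next I would observe that the numerical hypothesis of the corollary is precisely the hypothesis of Theorem \ref{c+f}(iii) after substituting $\depth_IN=c$: the assumption ``$\cd_IN=0$ or $\cd_IN\geq\max\{1,\,h_I(M,N)\}$'' becomes ``$\depth_IN=0$ or $\depth_IN\geq\max\{1,\,h_I(M,N)\}$''. Theorem \ref{c+f}(iii) then supplies the isomorphism $H^c_I(M,N)\cong\Hom_R(M,H^c_I(N))$. This is the crucial step, converting the pair's cohomology in degree $c$ into a Hom module built from the single surviving ordinary local cohomology module.

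Finally I would invoke the classical identity $\Ass_R\Hom_R(M,L)=\Supp_RM\cap\Ass_RL$, valid for $M$ finite and $L$ an arbitrary module over a Noetherian ring (in any event the inclusion $\Ass_R\Hom_R(M,L)\subseteq\Ass_RL$ already suffices). Applying it with $L=H^c_I(N)$ gives $\Ass_RH^c_I(M,N)=\Supp_RM\cap\Ass_RH^c_I(N)$, which is a subset of the finite set $\Ass_RH^c_I(N)$ found above, hence finite. The only point requiring care is that this Hom--Ass formula be used for the \emph{non-finite} module $L=H^c_I(N)$; but this presents no real obstacle, since the formula requires only that $M$ be finite and imposes no hypothesis on $L$.
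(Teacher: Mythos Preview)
Your proof is correct and follows essentially the same approach as the paper's: both reduce to the single surviving ordinary local cohomology module via Theorem \ref{c+f}(iii), apply Lemma \ref{ord-finite} to get finiteness of $\Ass_RH^c_I(N)$, and then use the identity $\Ass_R\Hom_R(M,L)=\Supp_RM\cap\Ass_RL$. Your write-up is in fact slightly more explicit than the paper's in justifying the Hom--Ass step.
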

\begin{proof} Let $c=\cd_IN$ and note $\depth_IN=c$ by hypothesis, so that $$H_I^i(N)=0 \quad \mbox{for\, all} \quad i\neq c.$$ By Lemma \ref{ord-finite}, the set $\Ass_RH^c_{I}(N)$ is finite. On the other hand, by Theorem \ref{c+f}(iii), we derive that
$${\rm Ass}_RH^c_I(M,N)=\Supp_RM\cap\Ass_RH^c_I(N),$$ which therefore must also be a finite set, as needed.
\end{proof}

\section{Cohen-Macaulayness}\label{CMsection}

\subsection{Modules versus pairs} Here we apply, in particular, results from the previous section to investigate the interplay between (relative) Cohen-Macaulay modules and our concept of Cohen-Macaulay pairs (see Definition \ref{pair-def}).

The first result is a direct consequence of Theorem \ref{c+f}(i).

\begin{corollary}\label{c+fcor}
Let $M, N$ be a pair of $R$-modules, with $M$ finite. The following assertions hold:
\begin{itemize}
\item [{\rm (i)}] If $N$ is Cohen-Macaulay with respect to $I$ and $h_I(M,N)=0$, then $M,N$ is a Cohen-Macaulay pair with respect to $I$;
\item [{\rm (ii)}] Assume that $M,N$ is a $(\cd_IN+h_I(M,N))$-Cohen-Macaulay pair with respect to $I$ and that the equality $\depth_IN=\depth_I(M,N)$ holds {\rm (}e.g., if ${\ann_RM/IM}=I$; see Remark \ref{BiZa}{\rm )}. Then, $N$ is Cohen-Macaulay with respect to $I$ if and only if $h_I(M,N)=0$.
\end{itemize}
\end{corollary}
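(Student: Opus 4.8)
The plan is to read off both assertions directly from the inequality chain in Theorem \ref{c+f}(i), namely
$$\depth_IN\leq\depth_I(M,N)\leq\cd_I(M,N)\leq\cd_IN+h_I(M,N),$$
treating it as a sequence of comparisons whose two extremes will be forced to coincide under the respective hypotheses, thereby squeezing the middle terms into equality.

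For part (i), I would substitute the two hypotheses into the right-hand end of the chain: since $N$ is Cohen-Macaulay with respect to $I$ we have $\depth_IN=\cd_IN$, and since $h_I(M,N)=0$ the rightmost term collapses to $\cd_IN+0=\cd_IN=\depth_IN$. Thus the leftmost and rightmost quantities agree, which forces equality throughout. In particular the two middle terms satisfy $\depth_I(M,N)=\cd_I(M,N)$, which is precisely the assertion that $M,N$ is a Cohen-Macaulay pair with respect to $I$.

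For part (ii), I would first unpack the hypothesis that $M,N$ is a $(\cd_IN+h_I(M,N))$-Cohen-Macaulay pair: by Definition \ref{pair-def} this means both $\depth_I(M,N)=\cd_I(M,N)$ and $\depth_I(M,N)=\cd_IN+h_I(M,N)$, the common value being a non-negative integer (so that $\cd_IN$ and $h_I(M,N)$ are each finite). Combining this with the assumed equality $\depth_IN=\depth_I(M,N)$ yields the single identity $\depth_IN=\cd_IN+h_I(M,N)$. From here the stated equivalence is pure arithmetic: $N$ being Cohen-Macaulay with respect to $I$ means $\depth_IN=\cd_IN$, and substituting the identity turns this into $\cd_IN+h_I(M,N)=\cd_IN$, i.e.\ $h_I(M,N)=0$; the reverse implication runs identically.

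Since every step is a substitution into the established inequality of Theorem \ref{c+f}(i), no serious obstacle arises. The only point demanding mild care is ensuring that the cancellation of $\cd_IN$ in part (ii) is legitimate; this is where I would explicitly invoke the integrality of $t=\depth_I(M,N)$ guaranteed by the definition of a $t$-Cohen-Macaulay pair, which forces $\cd_IN$ and $h_I(M,N)$ to be finite and thereby rules out any $\infty-\infty$ ambiguity, making the subtraction rigorous.
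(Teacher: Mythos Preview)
Your proposal is correct and follows exactly the approach the paper indicates: the paper gives no explicit proof, merely stating that the corollary ``is a direct consequence of Theorem~\ref{c+f}(i),'' and you have faithfully supplied the routine squeeze argument that this sentence encodes. Your extra care about finiteness in part~(ii) is a reasonable clarification of a point the paper leaves implicit.
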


Next we are able to characterize the Cohen-Macaulayness of a pair (with respect to $I$), provided that it admits a single non-zero Ext module. The result will also serve as a crucial tool later on, in Subsection \ref{AR}.

\begin{proposition}\label{cohomologicalformulas}
If $M, N$ is a pair of $R$-modules, with $M$ finite, such that there is a  non-negative integer $e$ with $\Ext^i_R(M,N)=0$ for all $i\neq e$, then $$\depth_I(M,N)=\depth_I\Ext^e_R(M,N)+e \quad \mbox{and} \quad \cd_I(M,N)=\cd_I\Ext^e_R(M,N)+e.$$
In this case, $\Ext^e_R(M,N)$ is Cohen-Macaulay with respect to $I$ if and only if $M,N$ is a Cohen-Macaulay pair with respect to $I$.
\end{proposition}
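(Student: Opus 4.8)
The plan is to exploit the hypothesis that $\Ext^i_R(M,N)=0$ for all $i\neq e$, which forces the spectral sequence of Lemma \ref{ss}(i) to collapse. Indeed, its $E_2$-page has entries $E_2^{p,q}=H^p_I(\Ext^q_R(M,N))$, and by hypothesis every row with $q\neq e$ vanishes identically. A spectral sequence concentrated on a single horizontal row degenerates at $E_2$, yielding an isomorphism $E_2^{p,e}\cong E_\infty^{p+e}$ for all $p$. Since the abutment is $H^{p+e}_I(M,N)$, this gives the fundamental identification
$$H^{p+e}_I(M,N)\cong H^p_I(\Ext^e_R(M,N)) \quad \mbox{for\, all} \quad p\geq 0,$$
with all other $H^i_I(M,N)$ vanishing (those with $i<e$). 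This single isomorphism is really the crux of the entire argument.

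From here the two numerical formulas follow by reading off indices. For the depth formula, I would note that $\depth_I(M,N)=\inf\{i\mid H^i_I(M,N)\neq0\}$; since $H^i_I(M,N)=0$ for $i<e$ and $H^{p+e}_I(M,N)\cong H^p_I(\Ext^e_R(M,N))$ for $p\geq0$, the infimum is attained at $i=p+e$ where $p$ is the least index with $H^p_I(\Ext^e_R(M,N))\neq0$, i.e. $p=\depth_I\Ext^e_R(M,N)$. This yields $\depth_I(M,N)=\depth_I\Ext^e_R(M,N)+e$. The cohomological dimension formula is symmetric: $\cd_I(M,N)=\sup\{i\mid H^i_I(M,N)\neq0\}$ is attained at $i=p+e$ for the largest such $p$, namely $p=\cd_I\Ext^e_R(M,N)$, giving $\cd_I(M,N)=\cd_I\Ext^e_R(M,N)+e$.

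The final biconditional is then purely formal. The pair $M,N$ is Cohen-Macaulay with respect to $I$ precisely when $\depth_I(M,N)=\cd_I(M,N)$; subtracting the common shift $e$ from both displayed formulas, this is equivalent to $\depth_I\Ext^e_R(M,N)=\cd_I\Ext^e_R(M,N)$, which is exactly the statement that $\Ext^e_R(M,N)$ is Cohen-Macaulay with respect to $I$.

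I do not anticipate a serious obstacle, since the mechanism is the standard collapse of a single-row spectral sequence. The only point requiring mild care is the degenerate edge case: if $\Ext^e_R(M,N)=0$ as well (so that the module is zero), then all cohomology vanishes and both invariants become $+\infty$; one should tacitly regard $\inf\emptyset=+\infty$ and $\sup\emptyset=-\infty$ consistently, but under the standing interpretation the formulas and the equivalence remain valid. The subtle part, if any, is simply being careful that $H^i_I(M,N)=0$ for $i<e$ (which the row-concentration also guarantees) so that no smaller index spuriously lowers the computed depth.
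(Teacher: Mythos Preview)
Your proposal is correct and follows exactly the paper's approach: the authors also invoke Lemma~\ref{ss}(i), observe that the hypothesis collapses the spectral sequence to the single row $q=e$, obtain the isomorphism $H^{p+e}_I(M,N)\cong H^p_I(\Ext^e_R(M,N))$ for all $p\geq 0$, and then simply write ``whence the result.'' Your write-up is in fact more detailed than theirs, spelling out the index-reading for depth and $\cd$ and the final biconditional that the paper leaves implicit.
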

\begin{proof}
Using the hypothesis, the spectral sequence of Lemma \ref{ss}(i) degenerates in such a way that $$H^{p+e}_I(M,N)\cong H^p_I(\Ext^e_R(M,N)) \quad \mbox{for\, all} \quad p\geq 0,$$ whence the result.
\end{proof}

\begin{remark}\label{local} \rm If $(R, \mathfrak{m})$ is local and $M, N$ are both finite, then as mentioned in Remark \ref{BiZa} we have the equality $\depth_R(M,N)=\depth_RN$ obtained in \cite[Theorem 2.3]{S}.
Along with Proposition \ref{cohomologicalformulas} (let $I=\mathfrak{m}$ there), this gives that if $e$ is a non-negative integer such that $\Ext^i_R(M,N)=0$ for all $i\neq e$, then $$\depth_RN=\depth_R\Ext^e_R(M,N)+e\quad\mbox{and}\quad\cd_R(M,N)=\dim_R\Ext^e_R(M,N)+e.$$
\end{remark}

The characterization below follows easily by Lemma  \ref{cdformula} (and again \cite[Theorem 2.3]{S}). 

\begin{corollary} Assume the conditions of Lemma  \ref{cdformula}. The following assertions are equivalent:
\begin{itemize}
    \item [{\rm (i)}] $N$ is maximal Cohen-Macaulay;
    \item [{\rm (ii)}] $M,N$ is a $d$-Cohen-Macaulay pair;
    \item [{\rm (iii)}] $M,N$ is a $d$-Cohen-Macaulay pair and $\depth_{\ann_RN}M=0$.
\end{itemize}
\end{corollary}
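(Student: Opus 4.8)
The plan is to reduce all three conditions to statements about the two numerical invariants $\depth_R(M,N)$ and $\cd_R(M,N)$, and then to pin both down using the two inputs already at hand: the depth identity $\depth_R(M,N)=\depth_RN$ of \cite[Theorem 2.3]{S} (recalled in Remark \ref{local}) and the formula $\cd_R(M,N)=d-\depth_{\ann_RN}M\le d$ of Lemma \ref{cdformula}. First I would record the elementary chain $\depth_R(M,N)\le\cd_R(M,N)\le d$, valid under the standing hypotheses. This shows that $M,N$ is a $d$-Cohen-Macaulay pair precisely when $\depth_R(M,N)=d$: if $\depth_R(M,N)=d$ then the squeeze forces $\cd_R(M,N)=d$ as well, while conversely a $d$-Cohen-Macaulay pair satisfies $\depth_R(M,N)=d$ by definition.

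For the equivalence (i) $\Leftrightarrow$ (ii), I would translate maximal Cohen-Macaulayness of $N$ into the equality $\depth_RN=d$, which is the correct reading since $R$ is local of dimension $d$ and $N$ is finite. Feeding this into the depth identity $\depth_R(M,N)=\depth_RN$ gives $\depth_R(M,N)=d$, which by the first paragraph is exactly the assertion that $M,N$ is a $d$-Cohen-Macaulay pair. The converse direction simply runs this equality backwards.

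For the equivalence (ii) $\Leftrightarrow$ (iii), the only new content is the side condition $\depth_{\ann_RN}M=0$. Here I would invoke Lemma \ref{cdformula} to observe that $\cd_R(M,N)=d$ holds if and only if $\depth_{\ann_RN}M=0$. Since every $d$-Cohen-Macaulay pair already has $\cd_R(M,N)=d$, the vanishing $\depth_{\ann_RN}M=0$ is automatic, so (ii) implies (iii); the reverse implication is immediate because (iii) contains (ii) verbatim.

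I do not anticipate a genuine obstacle: the corollary is essentially a bookkeeping combination of Lemma \ref{cdformula} with the depth identity from \cite{S}. The one point deserving a moment's care is the observation that, under the standing hypotheses, the three quantities $\depth_R(M,N)$, $\cd_R(M,N)$, and $d$ are forced to coincide as soon as any single one of the defining equalities in (i)--(iii) holds, so that no separate verification of the ``$\cd_R(M,N)=d$ implies $\depth_R(M,N)=d$'' direction is needed beyond the squeeze recorded at the outset.
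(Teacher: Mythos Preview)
Your proposal is correct and follows exactly the route the paper indicates: the corollary is stated with no explicit proof beyond the remark that it ``follows easily by Lemma~\ref{cdformula} (and again \cite[Theorem 2.3]{S}),'' and your argument unpacks precisely that combination via the squeeze $\depth_R(M,N)\le\cd_R(M,N)\le d$. The only minor comment is that your closing sentence about ``$\cd_R(M,N)=d$ implies $\depth_R(M,N)=d$'' is a non-issue you need not raise, since (iii) already contains the $d$-Cohen--Macaulay pair condition verbatim and hence never requires that implication.
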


\subsection{Semidualizing modules} In this part we  study the role of semidualizing modules in our investigation (for details about such modules we refer to \cite{J-L-Sean} and its suggested references). We will prove, for instance, that if $C$ is semidualizing then $R$ is Cohen-Macaulay with respect to $I$ if and only if $C,C$ is a Cohen-Macaulay pair with respect to $I$ (see Corollary \ref{nice-appl}).

\begin{definition}\rm
A finite $R$-module $C$ is a {\it semidualizing} module if the following conditions hold:
\begin{itemize}
    \item [(i)] The homothety homomorphism $R\rightarrow\Hom_R(C,C)$ is an isomorphism;
    \item [(ii)] $e_R(C,C)=0$.
\end{itemize}
\end{definition}

\begin{example}\rm First, $R$ is semidualizing as a module over itself. If $R$ is a Cohen-Macaulay local ring having a canonical module $\omega_R$, then $\omega_R$ is dualizing, hence semidualizing. While, for instance, every semidualizing module over a Gorenstein local ring is necessarily free, there are plenty of examples of Cohen-Macaulay local rings $R$ admitting a semidualizing module $C$ which is neither free nor dualizing. One of them can be described as follows. Let $k$ be a field and consider the local epimorphism 
$$S = k[\![x, y, t, u]\!]/(x^2, xy, y^2)\rightarrow R  =  k[\![x, y, t, u]\!]/(x^2, xy, y^2, t^2, tu, u^2).$$ Then, according to \cite[Example 1.2]{J-L-Sean}, the $R$-module
$C  =  {\rm Ext}_S^2(R, S)$ is semidualizing but neither free nor dualizing.
\end{example}

Throughout this subsection, $C$ stands for a semidualizing $R$-module. For the next definition, given an $R$-module $M$, we write $M^C=\Hom_R(M,C)$.

\begin{definition}\label{totref}\rm
A finite $R$-module $M$ is {\it totally $C$-reflexive} if the following conditions hold:
\begin{itemize}
    \item [(i)] $M$ is {\it $C$-reflexive}, i.e., the canonical homomorphism $M\rightarrow M^{CC}$ is an isomorphism; 
    \item [(ii)] $e_R(M,C)=e_R(M^C,C)=0$.
\end{itemize}
\end{definition}

Now, by taking $N=C$ and $e=0$ in Proposition \ref{cohomologicalformulas}, we derive the following fact.

\begin{corollary}\label{cohomologicalformulaswithR}
If $M$ is a finite $R$-module such that $e_R(M,C)=0$ {\rm (}e.g., if $M$ is totally $C$-reflexive{\rm )}, then $$\depth_I(M,C)=\depth_IM^C\quad\mbox{and}\quad\cd_I(M,C)=\cd_IM^C.$$ In this case, $M^C$ is Cohen-Macaulay with respect to $I$ if and only if $M,C$ is a Cohen-Macaulay pair with respect to $I$.
\end{corollary}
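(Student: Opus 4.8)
The plan is to derive Corollary \ref{cohomologicalformulaswithR} as a direct specialization of Proposition \ref{cohomologicalformulas}, taking $N = C$ and $e = 0$. The first step is to verify that the hypothesis of that proposition is met in this setting: I must check that $\Ext^i_R(M, C) = 0$ for all $i \neq 0$. But this is precisely the assumption $e_R(M, C) = 0$, since by definition $e_R(M, C) = \sup\{i \geq 0 \mid \Ext^i_R(M, C) \neq 0\}$; thus $e_R(M, C) = 0$ forces $\Ext^i_R(M, C) = 0$ for all $i > 0$, and the only (possibly) non-vanishing Ext module is $\Ext^0_R(M, C) = \Hom_R(M, C) = M^C$.

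Once the hypothesis is confirmed, I would apply Proposition \ref{cohomologicalformulas} with $e = 0$. Its two displayed formulas read $\depth_I(M, N) = \depth_I \Ext^e_R(M, N) + e$ and $\cd_I(M, N) = \cd_I \Ext^e_R(M, N) + e$. Substituting $N = C$ and $e = 0$, the additive terms vanish and $\Ext^0_R(M, C)$ becomes $M^C$, yielding immediately $\depth_I(M, C) = \depth_I M^C$ and $\cd_I(M, C) = \cd_I M^C$, as claimed. The final equivalence—that $M^C$ is Cohen-Macaulay with respect to $I$ if and only if $M, C$ is a Cohen-Macaulay pair with respect to $I$—is likewise inherited verbatim from the concluding statement of Proposition \ref{cohomologicalformulas} under the same substitution.

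For the parenthetical remark, I would observe that if $M$ is totally $C$-reflexive then condition (ii) of Definition \ref{totref} gives $e_R(M, C) = 0$ outright, so totally $C$-reflexive modules form a subclass of those satisfying the hypothesis; this justifies the ``e.g.'' in the statement. There is no genuine obstacle here: the corollary is a clean instantiation, and the only thing to be careful about is matching the indexing conventions and confirming that $\Ext^0_R(M, C)$ is literally $\Hom_R(M, C) = M^C$ in the paper's notation. The entire proof therefore reduces to citing Proposition \ref{cohomologicalformulas} and performing the substitution, which is exactly what the one-line proof in the excerpt does.
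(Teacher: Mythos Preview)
Your proposal is correct and matches the paper's approach exactly: the paper derives the corollary by the single sentence ``by taking $N=C$ and $e=0$ in Proposition \ref{cohomologicalformulas}, we derive the following fact,'' and your write-up simply unpacks that substitution and the parenthetical reference to Definition \ref{totref}.
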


We immediately record the case $M=C$ of Corollary \ref{cohomologicalformulaswithR}.

\begin{corollary}\label{nice-appl} We have
$\depth_I(C,C)=\depth_IR$ and $\cd_I(C,C)=\cd_IR$.
In particular, $R$ is Cohen-Macaulay with respect to $I$ $($i.e., $I$ is cohomologically a complete intersection; see Remark \ref{cci}$)$ if and only if $C,C$ is a Cohen-Macaulay pair with respect to $I$.
\end{corollary}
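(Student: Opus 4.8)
The plan is to obtain Corollary \ref{nice-appl} as the special case $M=C$ of Corollary \ref{cohomologicalformulaswithR}, so the entire argument reduces to verifying that the hypothesis of that corollary is met and identifying the resulting module $C^C$. First I would observe that $C$ is itself totally $C$-reflexive: the defining condition (i) of a semidualizing module says precisely that the homothety $R\to\Hom_R(C,C)=C^C$ is an isomorphism, and condition (ii) gives $e_R(C,C)=0$. In the language of Definition \ref{totref}, the canonical map $C\to C^{CC}$ is an isomorphism (it is the homothety isomorphism applied after the identification $C^C\cong R$), and both vanishing conditions $e_R(C,C)=e_R(C^C,C)=0$ hold, the latter because $C^C\cong R$ is free and hence $e_R(R,C)=0$. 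In particular the weaker hypothesis $e_R(C,C)=0$ needed for Corollary \ref{cohomologicalformulaswithR} is immediate from the definition of semidualizing.

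With $M=C$ in hand, I would apply Corollary \ref{cohomologicalformulaswithR} directly. That corollary yields $\depth_I(C,C)=\depth_I C^C$ and $\cd_I(C,C)=\cd_I C^C$. The key identification is $C^C=\Hom_R(C,C)\cong R$, supplied again by condition (i) of the semidualizing definition. Substituting this isomorphism into the two formulas gives $\depth_I(C,C)=\depth_I R$ and $\cd_I(C,C)=\cd_I R$, which are exactly the claimed equalities. I would note that these invariants depend only on the isomorphism class of the module via its generalized local cohomology, so the homothety isomorphism transfers the depth and cohomological dimension of $C^C$ to those of $R$ without further work.

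Finally, for the ``in particular'' clause, I would simply compare the two definitions of Cohen-Macaulayness through the equalities just established. By Definition \ref{pair-def}, the pair $C,C$ is Cohen-Macaulay with respect to $I$ exactly when $\depth_I(C,C)=\cd_I(C,C)$; by the two displayed equalities this holds if and only if $\depth_I R=\cd_I R$, which by the definition of relative Cohen-Macaulayness (and Remark \ref{cci}) is precisely the statement that $R$ is Cohen-Macaulay with respect to $I$. Invoking Remark \ref{cci} once more identifies this condition with $I$ being cohomologically a complete intersection, completing the equivalence.

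Because every substantive step is packaged inside the already-proven Corollary \ref{cohomologicalformulaswithR} and the defining isomorphism $\Hom_R(C,C)\cong R$, I do not anticipate a genuine obstacle; the only point requiring care is the bookkeeping that verifies $C$ satisfies the hypothesis of Corollary \ref{cohomologicalformulaswithR}, i.e.\ tracing $e_R(C,C)=0$ and $C^C\cong R$ back to the semidualizing axioms. The argument is essentially a clean specialization, and the main conceptual content lies in Corollary \ref{cohomologicalformulaswithR} and ultimately in Proposition \ref{cohomologicalformulas}, not in this final deduction.
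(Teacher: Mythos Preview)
Your proposal is correct and follows exactly the paper's approach: the paper simply records Corollary \ref{nice-appl} as the case $M=C$ of Corollary \ref{cohomologicalformulaswithR}, using the semidualizing axioms $e_R(C,C)=0$ and $C^C\cong R$. Your additional verification that $C$ is totally $C$-reflexive is more than is needed (only $e_R(C,C)=0$ is required), but it is correct and harmless.
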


\begin{corollary}
Suppose $R$ is local and $M$ is totally $C$-reflexive. Then, $$\cd_R(M,C)= \cd_R(M^C,C)=\dim_RM.$$
In particular, $\cd_R(C,C)=\dim R$.
\end{corollary}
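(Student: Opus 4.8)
The plan is to combine Corollary \ref{cohomologicalformulaswithR} with the classical identity $\cd_\mathfrak{m} L=\dim_R L$, valid for every finite module $L$ over the local ring $(R,\mathfrak{m})$ (Grothendieck vanishing together with non-vanishing; see \cite{BH}). Since $R$ is local, I take $I=\mathfrak{m}$ throughout, so that $\cd_R(-,C)=\cd_\mathfrak{m}(-,C)$ by the conventions of Definition \ref{defs}.

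First I would observe that $M^C$ is itself totally $C$-reflexive. It is $C$-reflexive because $(M^C)^{CC}=(M^{CC})^C\cong M^C$ canonically through the isomorphism $M\cong M^{CC}$, while $e_R(M^C,C)=0$ is part of Definition \ref{totref} and $e_R((M^C)^C,C)=e_R(M^{CC},C)=e_R(M,C)=0$. Hence Corollary \ref{cohomologicalformulaswithR} applies to both $M$ and $M^C$, yielding
$$\cd_\mathfrak{m}(M,C)=\cd_\mathfrak{m} M^C \qquad\text{and}\qquad \cd_\mathfrak{m}(M^C,C)=\cd_\mathfrak{m}(M^C)^C=\cd_\mathfrak{m} M^{CC}=\cd_\mathfrak{m} M,$$
the last step using $M^{CC}\cong M$. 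Applying $\cd_\mathfrak{m} L=\dim_R L$ to the finite modules $M$ and $M^C$ then gives $\cd_\mathfrak{m}(M^C,C)=\dim_R M$ immediately, and reduces the first equality to proving $\dim_R M^C=\dim_R M$.

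The main point is thus the support equality $\Supp_R M^C=\Supp_R M$. The inclusion $\subseteq$ is automatic from $(M^C)_\mathfrak{p}\cong\Hom_{R_\mathfrak{p}}(M_\mathfrak{p},C_\mathfrak{p})$, which holds since $M$ is finite. For the reverse inclusion I would use that $C$ is semidualizing, hence $C_\mathfrak{p}$ is semidualizing over $R_\mathfrak{p}$ and in particular nonzero for every $\mathfrak{p}\in\Spec R$, so that $\Supp_R C=\Spec R$; localizing the isomorphism $M\cong M^{CC}$ shows that if $M_\mathfrak{p}\neq 0$ then $(M^C)_\mathfrak{p}$ cannot vanish, for otherwise $M_\mathfrak{p}\cong (M^{CC})_\mathfrak{p}=\Hom_{R_\mathfrak{p}}((M^C)_\mathfrak{p},C_\mathfrak{p})=0$. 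This gives $\Supp_R M\subseteq\Supp_R M^C$, hence $\dim_R M^C=\dim_R M$, completing both displayed equalities. The delicate step is precisely this reverse inclusion, where $C$-reflexivity is essential: dropping it, one obtains only $\dim_R M^C\leq\dim_R M$.

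Finally, for the particular case $M=C$ I would verify that $C$ is totally $C$-reflexive, which is immediate from the defining properties of a semidualizing module: the homothety gives $C^C=\Hom_R(C,C)\cong R$, whence $C^{CC}\cong\Hom_R(R,C)=C$ canonically, while $e_R(C,C)=0$ by definition and $e_R(C^C,C)=e_R(R,C)=0$ since $R$ is free. Taking $M=C$ in the equalities just established, and noting $\dim_R C=\dim R$ because $\Supp_R C=\Spec R$, then yields $\cd_R(C,C)=\dim R$.
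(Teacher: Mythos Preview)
Your proof is correct and follows the same strategy as the paper: apply Corollary \ref{cohomologicalformulaswithR} to both $M$ and $M^C$, then verify $\dim_R M=\dim_R M^C$. The only minor differences are cosmetic: the paper obtains $\dim_R M=\dim_R M^C$ via the one-line annihilator argument ($\ann_R M=\ann_R M^{CC}\supseteq\ann_R M^C\supseteq\ann_R M$) rather than your equivalent support argument, and for the particular assertion the paper specializes to $M=R$ instead of $M=C$.
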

\begin{proof} First, we apply Corollary \ref{cohomologicalformulaswithR} with $I=\mathfrak{m}$ and use that $M$ is $C$-reflexive, to obtain $$\cd_R(M,C)=\dim_RM^C \quad \mbox{and} \quad \cd_R(M^C,C)=\dim_RM.$$ On the other hand, again because $M^{CC}\cong M$, the $R$-modules $M$ and $M^C$ have the same annihilator, hence the same dimension. Finally, noticing that $R$ is totally $C$-reflexive and letting $M=R$ in the formula $\cd_R(M^C,C)=\dim_RM$, we get the particular assertion.\end{proof}

\subsection{Application: Auslander-Reiten conjecture}\label{AR} We close the paper by applying our Proposition \ref{cohomologicalformulas} to tackle a  case of a famous problem in homological algebra. More precisely, we consider the following (commutative) local version of the conjecture from \cite[p.\,70]{AR}.

\medskip

\noindent {\it Auslander-Reiten conjecture}: Suppose $R$ is local. If $M$ is a finite $R$-module such that $$e_R(M, R)=e_R(M, M)=0,$$ then $M$ is a free $R$-module.

\medskip

This problem has been extensively explored in the literature but remains open if, e.g., $R$ is Gorenstein. We refer to \cite{H-MN} and the references suggested there. One of the key tools for the two corollaries below is the following lemma, which is a special case of \cite[Theorem 3.14]{H-MN}. We adopt the standard notation $M^*={\rm Hom}_R(M, R)$.

\begin{lemma}\label{mainlemma}
Let $R$ be a Cohen-Macaulay local ring of dimension $d\geq2$, and let $M$ be a finite $R$-module. Assume $n$ is an integer such that $1\leq n\leq d-1$. Then, $M$ is a free $R$-module if the following conditions hold:  \begin{itemize}
    \item [{\rm (i)}] $M$ is locally of finite projective dimension in codimension $n$;
    \item [{\rm (ii)}] $M$ is a reflexive $R$-module;
    \item [{\rm (iii)}] $\Ext^i_R(M^*, R)=0$ for all $i=1, \ldots, d$;
    \item [{\rm (iv)}] $\Ext^j_R(M, M)=0$ for all $j=n, \ldots, d-1$.
\end{itemize}
\end{lemma}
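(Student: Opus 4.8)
The plan is to obtain freeness from a single induction on $d=\dim R$, resting on two technical pillars: that hypotheses (ii) and (iii) already force $M$ to be maximal Cohen-Macaulay, and that a depth/rigidity argument at the closed point, fed by (i) and (iv), finishes the job. Throughout I write $M^*=\Hom_R(M,R)$ and use the Auslander transpose $\Tr M$ together with the canonical exact sequence
$$0\to\Ext^1_R(\Tr M,R)\to M\to M^{**}\to\Ext^2_R(\Tr M,R)\to0,$$
so that reflexivity (ii) is precisely the vanishing of the two outer terms.

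First I would show $M$ is maximal Cohen-Macaulay. Dualizing a free presentation $P_1\to P_0\to M\to0$ yields $0\to M^*\to P_0^*\to P_1^*\to\Tr M\to0$; breaking this into two short exact sequences and using that $P_0^*,P_1^*$ are free gives isomorphisms $\Ext^i_R(\Tr M,R)\cong\Ext^{i-2}_R(M^*,R)$ for all $i\geq3$. Hence (iii) forces $\Ext^i_R(\Tr M,R)=0$ for $3\leq i\leq d$, while (ii) disposes of $i=1,2$; thus $\Ext^i_R(\Tr M,R)=0$ for every $1\leq i\leq d$, so $M$ is a $d$-th syzygy and therefore maximal Cohen-Macaulay over the Cohen-Macaulay ring $R$.

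Next I would note that (i) upgrades to freeness in codimension $n$: for $\height\mathfrak p\leq n$ the localization $M_{\mathfrak p}$ is maximal Cohen-Macaulay over $R_{\mathfrak p}$ and of finite projective dimension, so $\pd_{R_{\mathfrak p}}M_{\mathfrak p}=\depth R_{\mathfrak p}-\depth_{R_{\mathfrak p}}M_{\mathfrak p}=0$ by Auslander-Buchsbaum; consequently the non-free locus of $M$ has dimension at most $d-n-1$. To cover the remaining nonmaximal primes I would run the induction: localizing at $\mathfrak p\neq\mathfrak m$ with $\dim R_{\mathfrak p}=d'<d$, each hypothesis descends to $R_{\mathfrak p}$ (duals and $\Ext$ commute with localization, and the ranges in (iii),(iv) contain the truncated ranges $1,\dots,d'$ and $n,\dots,d'-1$ required in dimension $d'$, with $n'=n\leq d'-1$ whenever $\height\mathfrak p\geq n+1$), while primes of small height are handled directly via reflexivity and Auslander-Buchsbaum. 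The inductive hypothesis then gives $M_{\mathfrak p}$ free, so $M$ is free on the entire punctured spectrum.

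The final and hardest step is to convert the top-range self-Ext vanishing (iv), $\Ext^j_R(M,M)=0$ for $n\leq j\leq d-1$, into freeness at $\mathfrak m$. At this point $M$ is maximal Cohen-Macaulay and free in codimension $n$, so each $\Ext^j_R(M,M)$ with $j\geq1$ is supported on a locus of dimension at most $d-n-1$; I would then use local duality over the Cohen-Macaulay ring $R$ together with a depth count to argue that, were $M$ not free, some $\Ext^j_R(M,M)$ with $n\leq j\leq d-1$ would be forced to be nonzero, contradicting (iv). I expect this last step to be the main obstacle: it is exactly the rigidity input linking the partial self-Ext vanishing—available only in the band $n,\dots,d-1$, whose length $d-n$ matches the bound on the non-free locus—to the dual data in (iii), and it is where the Cohen-Macaulayness of $R$ and the precise numerical ranges are genuinely used, rather than formal localization.
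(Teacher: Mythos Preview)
The paper does not prove this lemma at all; it is quoted as a special case of \cite[Theorem~3.14]{H-MN}, so there is no in-paper argument to compare your attempt against. What can be assessed is whether your outline stands as a proof on its own.

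Your first three steps are essentially sound. The conclusion that $M$ is maximal Cohen-Macaulay is correct, though the passage ``$\Ext^i_R(\Tr M,R)=0$ for $1\le i\le d$, so $M$ is a $d$-th syzygy'' is not automatic: $n$-torsionfreeness does not in general force $M$ to be an $n$-th syzygy over an arbitrary Cohen-Macaulay ring. A cleaner route avoids the transpose and uses (ii) and (iii) directly: dualizing a free resolution $\cdots\to G_1\to G_0\to M^*\to0$ yields a complex $0\to M^{**}\to G_0^*\to G_1^*\to\cdots$ whose cohomology at spot $i\ge1$ is $\Ext^i_R(M^*,R)$, so (iii) exhibits $M\cong M^{**}$ as a $d$-th syzygy, hence MCM. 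The upgrade to local freeness in codimension $n$ via Auslander--Buchsbaum, and the inductive descent of the hypotheses to nonmaximal primes, are fine.

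The genuine gap is your Step~4. You yourself flag it as ``the main obstacle'' and then provide only a heuristic (``local duality together with a depth count'') and a numerological remark that the length $d-n$ of the vanishing band in (iv) matches the bound $d-n-1$ on the dimension of the non-free locus. That is not a proof. After your reduction one has $M$ maximal Cohen-Macaulay, locally free on the punctured spectrum, $\Ext^{d-1}_R(M,M)=0$, and the dual vanishing (iii); you have not explained how these combine to force freeness at $\mathfrak m$. Local duality over a Cohen-Macaulay ring presupposes a canonical module, which is not assumed here, and even granting one you have not indicated which pairing or spectral sequence would manufacture a nonzero $\Ext^j_R(M,M)$ in the prescribed range from the failure of freeness. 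This is exactly where the substance of the cited theorem resides, and your sketch does not supply it; as written, the proposal reduces the lemma to its hardest case and then stops.
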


First we record yet another freeness characterization for modules over a Cohen-Macaulay local ring $R$. We consider, in particular, the notion of Gorenstein dimension $\gdim_RN$ of finite $R$-modules $N$; see \cite{AuB} for the classical theory. We also observe that, since  $\gdim_RN$ is always finite if $R$ is Gorenstein, our result largely extends \cite[Corollary 10]{A} (which is proved with the aid of stable cohomology modules).

\begin{corollary} Let $R$ be a Cohen-Macaulay local ring of dimension $d\geq2$, and let $M$ be a finite $R$-module. Then, $M$ is a free $R$-module if the following conditions hold:  \begin{itemize}
    \item [{\rm (i)}] $M$ is locally of finite projective dimension on the punctured spectrum of $R$ {\rm (}e.g., if $R$ is an isolated singularity or if $M$ defines a vector bundle{\rm )};
    \item [{\rm (ii)}] $M$ is a reflexive $R$-module;
    \item [{\rm (iii)}] $e_R(M, R)=0$;
    \item [{\rm (iv)}] $\Ext^{d-1}_R(M, M)=0$;
    \item [{\rm (v)}] $\gdim_RM^*<\infty$.
    \end{itemize}
\end{corollary}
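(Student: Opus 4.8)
The plan is to reduce this corollary to the preceding Lemma~\ref{mainlemma} by producing, from the five hypotheses here, the four hypotheses required there (for a suitable choice of $n$). First I would set $n=d-1$, which is legitimate since $d\geq 2$ forces $1\leq d-1\leq d-1$. With this choice, hypothesis (iv) of the lemma reads $\Ext^j_R(M,M)=0$ for $j=d-1,\dots,d-1$, i.e.\ exactly the single vanishing $\Ext^{d-1}_R(M,M)=0$, which is precisely hypothesis (iv) of the corollary. Likewise, being locally of finite projective dimension on the punctured spectrum is the same as being locally of finite projective dimension in codimension $d-1$ (the primes of codimension $\leq d-1$ are exactly the non-maximal primes), so (i) of the corollary yields (i) of the lemma, and (ii) matches (ii) verbatim.

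The substantive work is therefore to derive condition (iii) of the lemma, namely $\Ext^i_R(M^*,R)=0$ for all $i=1,\dots,d$, from the remaining hypotheses (iii) and (v) of the corollary, i.e.\ from $e_R(M,R)=0$ together with $\gdim_RM^*<\infty$. Here I expect to invoke Proposition~\ref{cohomologicalformulas} (or rather the circle of ideas around it) with $C=R$: the hypothesis $e_R(M,R)=0$ says that $\Ext^i_R(M,R)$ vanishes for all $i\neq 0$, so $M^*=\Hom_R(M,R)$ is the unique non-vanishing cohomology module. The key point is that over a Cohen-Macaulay local ring, a finite module of finite Gorenstein dimension $g$ satisfies $\gdim_RM^*=\depth R-\depth_RM^*$ (the Auslander--Bridger formula), and a module of finite G-dimension has $\Ext^i_R(-,R)$ controlled by that dimension. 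I would combine the condition $e_R(M,R)=0$ (which forces $M$ to behave like a totally reflexive module relative to $R$) with the finiteness of $\gdim_RM^*$ to conclude that $\gdim_RM^*=0$, i.e.\ $M^*$ is totally reflexive, which is exactly the statement that $\Ext^i_R(M^*,R)=0$ for all $i\geq 1$; in particular for $i=1,\dots,d$.

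The hard part will be establishing that $\gdim_RM^*=0$ rather than merely finite. The mechanism I anticipate is this: reflexivity of $M$ (hypothesis (ii)) together with $e_R(M,R)=0$ should force the dual $M^*$ to have vanishing higher self-$\Ext$ into $R$ at the level of \emph{stable} cohomology, and finiteness of $\gdim_RM^*$ promotes this to genuine vanishing; concretely, $\gdim_RM^*<\infty$ means $\Ext^i_R(M^*,R)=0$ for $i\gg0$, and an inductive descent (using the defining exact sequences of a totally reflexive resolution, or a depth count via Auslander--Bridger) pushes the vanishing down to all $i\geq 1$. One must be careful that the finiteness of projective dimension on the punctured spectrum is genuinely needed to control the situation at non-maximal primes, but since G-dimension localizes well, the global vanishing $\Ext^i_R(M^*,R)=0$ should follow once the depth bookkeeping closes up.

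Once (i)--(iv) of Lemma~\ref{mainlemma} are verified for $n=d-1$, the lemma applies verbatim and yields that $M$ is free, completing the proof. I would close by remarking that the only essential new ingredient beyond the lemma is the translation of the two hypotheses $e_R(M,R)=0$ and $\gdim_RM^*<\infty$ into the single vanishing block $\Ext^i_R(M^*,R)=0$ for $1\leq i\leq d$, which is where the G-dimension theory of \cite{AuB} does the real work.
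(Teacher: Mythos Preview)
Your overall strategy is exactly the paper's: set $n=d-1$, match hypotheses (i), (ii), (iv) of Lemma~\ref{mainlemma} directly, and use (iii) and (v) of the corollary to manufacture hypothesis (iii) of the lemma. That reduction is correct.

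Where your write-up goes astray is in the derivation of $\Ext^i_R(M^*,R)=0$ for $1\le i\le d$, which you describe as ``the hard part'' and then surround with hedges (stable cohomology, inductive descent along totally reflexive resolutions, localization to the punctured spectrum). None of that is needed, and the speculation that condition (i) on the punctured spectrum plays a role in this step is simply wrong. The paper's argument is a two-line depth computation, and you actually have both lines in hand without assembling them: from $e_R(M,R)=0$, Proposition~\ref{cohomologicalformulas} with $N=R$ and $e=0$ (via Remark~\ref{local}) gives
\[
\depth_R M^{*} \;=\; \depth_R \Hom_R(M,R) \;=\; \depth_R R \;=\; d,
\]
so $M^{*}$ is maximal Cohen--Macaulay. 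Now, and only now, invoke (v): since $\gdim_R M^{*}<\infty$, the Auslander--Bridger formula yields $\gdim_R M^{*}=d-\depth_R M^{*}=0$, i.e.\ $M^{*}$ is totally reflexive, whence $\Ext^i_R(M^{*},R)=0$ for all $i\ge 1$. That is hypothesis (iii) of the lemma, and you are done.

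In short: right plan, but you missed that Proposition~\ref{cohomologicalformulas} already delivers $\depth_R M^{*}=d$, which is the single fact that collapses $\gdim_R M^{*}$ to zero. Strip out the stable-cohomology and descent talk and state this directly.
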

\begin{proof} First, because $e_R(M, R)=0$, we can apply Proposition \ref{cohomologicalformulas} (along with Remark \ref{local}) with $N=R$ and $e=0$ to conclude that $M^*$ is maximal Cohen-Macaulay. Now, as $\gdim_RM^*<\infty$, the classical Auslander-Bridger formula gives $$\gdim_RM^*=d-{\rm depth}_RM^*=0,$$ which is equivalent to $M^*$ being totally reflexive (see Definition \ref{totref} with $C=R$); in particular, $$\Ext^i_R(M^*, R)=0 \quad \mbox{for\, all} \quad i=1, \ldots, d.$$ We now apply Lemma \ref{mainlemma} with $n=d-1$. \end{proof}

\smallskip

Finally, we provide a result -- some variant of which is possibly known to the experts -- toward the Auslander-Reiten conjecture. The proof we have found is rather simple and makes use of generalized local cohomology as a new tool to tackle the problem, and also as a new alternative replacement for the current, often hard, derived category methods that have been used in the literature.

\begin{corollary}\label{appAR} Let $R$ be a Cohen-Macaulay local ring of dimension $d\geq2$. Then, the Auslander-Reiten conjecture is true if the following three conditions hold:
    \begin{itemize}
    \item [{\rm (i)}] $M$ is locally of finite projective dimension {\rm (}e.g., locally free{\rm )} in codimension $1$;
    \item [{\rm (ii)}] $M$ is a reflexive $R$-module;
    \item [{\rm (iii)}] $\gdim_RM^*<\infty$.
\end{itemize}
\end{corollary}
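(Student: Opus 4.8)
The plan is to reduce the Auslander--Reiten conjecture, under hypotheses (i)--(iii), to an application of Lemma~\ref{mainlemma} with $n=1$, exactly as in the preceding corollary but now with the Ext-vanishing hypotheses (iii) and (iv) of that lemma \emph{deduced} from the standing assumptions $e_R(M,R)=e_R(M,M)=0$ of the conjecture rather than assumed outright. Recall that in the Auslander--Reiten conjecture we are given $e_R(M,R)=0$ and $e_R(M,M)=0$; the task is to show $M$ is free. So first I would invoke $e_R(M,R)=0$ together with Proposition~\ref{cohomologicalformulas} (taking $N=R$, $e=0$) and Remark~\ref{local} to conclude that $M^*=\Hom_R(M,R)$ is maximal Cohen--Macaulay, precisely as in the proof of the previous corollary.

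Next, the finiteness of the Gorenstein dimension in (iii) lets me run the Auslander--Bridger formula: since $\gdim_RM^*<\infty$ and $M^*$ is maximal Cohen--Macaulay (so $\depth_RM^*=d$), we get $\gdim_RM^*=d-\depth_RM^*=0$, i.e.\ $M^*$ is totally reflexive. In particular this yields $\Ext^i_R(M^*,R)=0$ for all $i=1,\ldots,d$, which is hypothesis (iii) of Lemma~\ref{mainlemma}. The reflexivity hypothesis (ii) of the lemma is exactly (ii) here, and hypothesis (i) of the lemma with $n=1$ is the codimension-$1$ local-finiteness assumption (i). What remains to supply is hypothesis (iv) of Lemma~\ref{mainlemma}: $\Ext^j_R(M,M)=0$ for all $j=1,\ldots,d-1$. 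Here the assumption $e_R(M,M)=0$ from the conjecture is precisely what delivers this: by definition $e_R(M,M)=\sup\{i\geq0\mid \Ext^i_R(M,M)\neq0\}=0$ forces $\Ext^j_R(M,M)=0$ for every $j\geq1$, in particular on the range $j=1,\ldots,d-1$.

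With all four hypotheses of Lemma~\ref{mainlemma} verified (using $n=1$, which is legitimate since $1\leq n\leq d-1$ because $d\geq2$), the lemma immediately gives that $M$ is free. So the logical skeleton is: $e_R(M,R)=0\Rightarrow M^*$ maximal Cohen--Macaulay; $\gdim_RM^*<\infty\Rightarrow M^*$ totally reflexive $\Rightarrow$ condition (iii) of the lemma; $e_R(M,M)=0\Rightarrow$ condition (iv); and conditions (i),(ii) of the corollary feed directly into (i),(ii) of the lemma.

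I expect no serious obstacle, since this is essentially a bookkeeping argument assembling Proposition~\ref{cohomologicalformulas}, the Auslander--Bridger formula, and Lemma~\ref{mainlemma}. The one point deserving care is the translation of the conjecture's hypotheses into the lemma's Ext-vanishing conditions: I must be sure that $e_R(M,M)=0$ genuinely clears the \emph{entire} range $j=n,\ldots,d-1$ demanded by Lemma~\ref{mainlemma}(iv) rather than a single degree, and that the totally reflexive conclusion for $M^*$ really yields vanishing $\Ext^i_R(M^*,R)$ for all $i=1,\ldots,d$ (not just $i\geq1$ up to the Gorenstein dimension). Both follow from the definitions, but they are the steps where the argument actually uses the strength of the hypotheses, so they are where I would be most careful in writing the proof cleanly.
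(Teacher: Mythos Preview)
Your proposal is correct and follows essentially the same route as the paper: use $e_R(M,R)=0$ with Proposition~\ref{cohomologicalformulas} and Remark~\ref{local} to get $M^*$ maximal Cohen--Macaulay, apply Auslander--Bridger with (iii) to obtain $\gdim_RM^*=0$ (hence $\Ext^i_R(M^*,R)=0$ for all $i\geq1$), and then invoke Lemma~\ref{mainlemma} with $n=1$, the remaining hypothesis (iv) being supplied by $e_R(M,M)=0$. Your write-up is in fact more explicit than the paper's, which simply refers back to the argument of the preceding corollary.
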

\begin{proof} Since again $e_R(M, R)=0$ and $\gdim_RM^*<\infty$, we can use the argument of the preceding proof to obtain $$e_R(M^*, R)=0.$$ Now the freeness of $M$ follows readily by Lemma \ref{mainlemma} with $n=1$.
\end{proof}

\smallskip

Recall that maximal Cohen-Macaulay modules over Gorenstein local rings are reflexive. Now the Gorenstein case of Corollary \ref{appAR} is immediately seen to be the following (cf.\,also \cite[Corollary 4]{A}).

\begin{corollary} Let $R$ be a Gorenstein local ring of dimension $d\geq2$. Then, the Auslander-Reiten conjecture is true if $M$ is locally of finite projective dimension in codimension $1$.
\end{corollary}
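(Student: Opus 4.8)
The plan is to deduce the statement directly from Corollary \ref{appAR}, by checking that over a Gorenstein local ring its two additional hypotheses---reflexivity of $M$ and finiteness of $\gdim_R M^*$---come for free once we assume the Auslander-Reiten data $e_R(M,R)=e_R(M,M)=0$. Condition (i) of Corollary \ref{appAR} is identical to the hypothesis imposed here, so only conditions (ii) and (iii) need to be established.

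Condition (iii) is immediate: by the Auslander-Bridger theory (see \cite{AuB}), a Noetherian local ring is Gorenstein precisely when every finite module has finite Gorenstein dimension. Hence $\gdim_R M^*<\infty$ holds unconditionally in our setting, giving (iii).

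For condition (ii), I would use the vanishing $e_R(M,R)=0$ in conjunction with the finiteness of $\gdim_R M$. When $\gdim_R M<\infty$ one has the standard Ext-formula $\gdim_R M=\sup\{i\geq 0\mid \Ext^i_R(M,R)\neq 0\}=e_R(M,R)$; since the latter is $0$, we conclude $\gdim_R M=0$, i.e., $M$ is totally reflexive and in particular reflexive. (Equivalently, the Auslander-Bridger formula $\gdim_R M=d-\depth_R M$ shows $\depth_R M=d$, so that $M$ is maximal Cohen-Macaulay, and such modules are reflexive over a Gorenstein ring, as recalled just before the statement.) This yields (ii).

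Having verified (i), (ii), and (iii), Corollary \ref{appAR} applies and forces $M$ to be free, which is exactly the assertion of the Auslander-Reiten conjecture in this case. I anticipate no real obstacle here: the proof is essentially a verification of hypotheses resting on classical facts about G-dimension, and the only point demanding attention is to keep track that reflexivity is required for $M$ itself (handled via $e_R(M,R)=0$ above) while finiteness of Gorenstein dimension is required for $M^*$ (automatic over a Gorenstein ring).
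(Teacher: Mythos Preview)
Your proposal is correct and matches the paper's approach: the paper's proof is just the sentence ``Recall that maximal Cohen-Macaulay modules over Gorenstein local rings are reflexive,'' followed by invoking Corollary~\ref{appAR}, which is exactly the parenthetical route you give for establishing condition~(ii). Your write-up is in fact more detailed than the paper's, since you also spell out the equivalent argument via $\gdim_R M = e_R(M,R) = 0$ directly yielding total reflexivity.
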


\begin{remark}\rm As seen above, the condition $e_R(M, R)=0$ forces $M^*$ to have maximal depth (in the present situation where $R$ is a Cohen-Macaulay local ring). Note it is also the case that ${\rm cd}_R(M, R)=\dim_RM^*$. Now, concerning the depth of $M$ itself, what we derive by assuming $e_R(M, M)=0$ (and applying Proposition \ref{cohomologicalformulas} with $N=M$) is that $${\rm depth}_RM={\rm depth}_R{\rm End}_R(M),$$ where ${\rm End}_R(M)$ stands for the endomorphism module of $M$. In addition, we obtain $${\rm cd}_R(M, M)=\dim_R{\rm End}_R(M).$$
It would therefore be interesting to investigate whether such facts shed any light on the Auslander-Reiten conjecture. 
\end{remark}

\bigskip

\noindent{\bf Acknowledgements.} The first-named author thanks IMPA for the Summer Postdoc 2023 and was partially supported by CNPq, grant 200863/2022-3. The second-named author was partially supported by CNPq, grants 301029/2019-9 and 406377/2021-9.

\end{document}